\documentclass[12pt]{amsart}
\usepackage{graphicx}
\usepackage{xcolor}
\usepackage{amsmath,amssymb}
\numberwithin{equation}{section}

\newtheorem{theorem}{Theorem}[section]
\newtheorem{proposition}[theorem]{Proposition}
\newtheorem{lemma}[theorem]{Lemma}
\newtheorem{corollary}[theorem]{Corollary}
\theoremstyle{definition}

\theoremstyle{remark}
\newtheorem{remark}[theorem]{Remark}

\newcommand{\N}{\mathbb{N}}
\newcommand{\R}{\mathbb{R}}

\newcommand{\Aopt}{\mathcal A^{\mathrm{disc}}}

\title[Discrete Hardy and H\"older inequalities]{Discrete reversed Hardy inequalities and H\"older estimates in Lorentz sequence spaces}

\date{}
\author[S. Barza]{Sorina Barza}
\address{Department of Mathematics and Computer Science, Karlstad University, SE-65188 Karlstad, SWEDEN}
\email{sorina.barza@kau.se}

\author[A.N. Marcoci]{Anca-Nicoleta Marcoci}
\address{Department of Mathematics and Computer Science, Technical University of Civil Engineering Bucharest, RO-020396 Bucharest, ROMANIA}
\email{anca.marcoci@utcb.ro}

\author[L.G. Marcoci]{Liviu-Gabriel Marcoci}
\address{Department of Mathematics and Computer Science, Technical University of Civil Engineering Bucharest, RO-020396 Bucharest, ROMANIA}
\email{liviu.marcoci@utcb.ro}

\keywords{reversed Hardy inequalities, Lorentz sequence spaces, weighted $\ell^p$ spaces with power weights, decreasing sequences, H\"older-type inequalities, discrete constants}
\subjclass[2020]{26D15, 46B45, 47A30, 47B37}

\begin{document}

\begin{abstract}
We prove sharp reversed Hardy-type inequalities on the cone of decreasing sequences in weighted $\ell^p$-spaces with power weights, applying them to estimate standard norms in Lorentz sequence spaces. We prove that the discrete structure of these spaces yields optimal constants that are different from those in the continuous setting.  
\end{abstract}

\maketitle

\section{Introduction}

Let $1<p<\infty$ and $1\le s\le \infty$. The Lorentz sequence space $\ell^{p,s}$ consists of all complex sequences $x=(x_n)_{n\ge1}$ such that
\[
\|x\|_{p,s}
=
\begin{cases}
\left( \displaystyle \sum_{n=1}^{\infty} n^{\frac{s}{p}-1} (x_n^*)^s \right)^{1/s}, & 1\le s<\infty,\\[2ex]
\displaystyle \sup_{n\ge1} n^{1/p} x_n^*, & s=\infty,
\end{cases}
\]
is finite, where $x^*$ denotes the nonincreasing rearrangement of $(|x_n|)_n$.

Lorentz sequence spaces refine the classical scale of discrete Lebesgue spaces $\ell^p$ and play a central role in interpolation theory, harmonic analysis, and the geometry of Banach spaces. When $1\le s\le p$, the functional $\|\cdot\|_{p,s}$ is a norm, while for $p<s$ it is only a quasinorm. In this latter case, several equivalent norms are available. Among them, two are particularly relevant.

The first one is the maximal Lorentz norm
\[
\|x\|^{*}_{p,s}
=
\begin{cases}
\left( \displaystyle \sum_{n=1}^{\infty} n^{\frac{s}{p}-1} \left(\frac{1}{n}\sum_{k=1}^n x_k^*\right)^s \right)^{1/s}, & 1\le s<\infty,\\[2ex]
\displaystyle \sup_{n\ge1} n^{-1/p'} \sum_{k=1}^n x_k^*, & s=\infty,
\end{cases}
\]
where $1/p+1/p'=1$. The second one is the dual norm
\[
\|x\|_{p,s}'
=
\sup\left\{
\left|\sum_{n=1}^{\infty} x_n y_n\right|:
 y\in \ell^{p',s'},\; \|y\|_{p',s'}\le1
\right\},
\]
where $1/s+1/s'=1$, with the usual convention $s'=\infty$ if $s=1$.

We shall use the discrete Hardy operator, or Ces\`aro operator,
\[
(Cx)_n=\frac{1}{n}\sum_{k=1}^n |x_k|.
\]
For nonnegative nonincreasing sequences $x=(x_n)_n$, if
\begin{equation}\label{icsn}   
X_n=\sum_{k=1}^n x_k,
\end{equation}
then
\[
\|x\|_{p,s}^{*}
=
\left(\sum_{n=1}^{\infty} n^{-\frac{s}{p'}-1}X_n^s\right)^{1/s},
\qquad 1\le s<\infty.
\]

In the continuous setting, sharp relations between the standard, maximal, and dual norms in Lorentz spaces $L^{p,s}$ were established in \cite{BKS} and \cite{KolyadaSoria2010}. While these works completely solve the continuous case, a finding of the present paper is that the discrete atomic structure yields different optimal constants. Consequently, our sharp discrete H\" older-type inequalities and norm estimates are not adaptations of continuous bounds, but represent a distinct case.
The key tool is the level function introduced by Halperin and Lorentz in \cite{Ha} and \cite{Lo}. The level function was later studied systematically by Sinnamon, in particular in connection with duality; see \cite{Si1}, \cite{Si2} and the references given there.

In the discrete framework, the representation of the dual norm in $\ell^{p,s}$ via the level sequence was obtained in \cite{BarzaMarcociPersson2012} and \cite{BMMP}. This representation yields
\[
\|x\|_{p,s}' = \|x^{\circ}\|_{p,s},
\]
where $x^{\circ}$ denotes the level sequence associated with $x$ with respect to the weight
\begin{equation}\label{fiandalfa}
{\varphi}_n=n^{-\alpha},
\qquad
\alpha = 1 - \frac{s'}{p'}.
\end{equation}
This identity plays in $\ell^{p,s}$ the same role as the level function does in the continuous case. It is used here for two reasons. First, the dual norm is defined by a supremum over all test sequences, and the level-sequence theorem converts this external variational problem into the internal Lorentz norm of a canonically associated decreasing sequence. Secondly, the maximal norm is governed by Hardy averages of the original sequence. The comparison between the dual and maximal norms therefore becomes a comparison between two monotone sequences, the level sequence and the Hardy averages. This is the discrete analogue of the standard role of the Halperin--Lorentz level function in sharp Lorentz-space duality.

The present paper develops a discrete approach to norm estimates and H\"older-type inequalities in Lorentz sequence spaces. Building on the distinction established above, we show how the first atoms determine the new optimal constants. This contribution is given here by the one-atom sequence
\[
u^1=(1,0,0,\ldots).
\]
Indeed,
\[
\|u^1\|_{p,s}^{*\,s}
=\sum_{n=1}^{\infty} n^{-\frac{s}{p'}-1}
=\zeta\!\left(\frac{s}{p'}+1\right),
\]
and this value gives the sharp estimates between the dual and maximal Lorentz norms.

Our first result is a sharp reversed Hardy inequality with increasing power weights for nonincreasing  sequences. In the context of Lorentz sequence spaces we get
\[
\zeta\!\left(\frac{s}{p'}+1\right)^{1/s}\|x\|_{p,s}
\le \|x\|_{p,s}^{*},
\qquad 1<p\le s<\infty.
\]
The constant is optimal and is attained by the "one-point sequence".

For decreasing power weights, the optimal  constant in the discrete case is expressed as a supremum over the block sequences
\[u^K=(\underbrace{1,\ldots,1}_{K\text{ times}},0,0,\ldots).
\]
This formulation captures both the atomic behavior and the long-block asymptotic regime.

Combining this representation with the sharp reversed Hardy inequality gives
\[
\|x\|_{p,s}'
\le
\zeta\!\left(\frac{s}{p'}+1\right)^{-1/s}
\|x\|_{p,s}^{*},
\qquad 1<p\le s<\infty,
\]
with best possible constant.

Finally, we formulate H\"older-type inequalities involving maximal Lorentz norms. The optimal discrete H\"older constant is denoted as a variational constant, and we give computable lower bounds from the block sequences together with an explicit upper bound obtained from the one-factor norm comparisons.

The paper is organized as follows. Section~2 recalls basic definitions and the construction of the level sequence. Section~3 establishes reversed Hardy-type inequalities for decreasing sequences with power weights and derives the corresponding norm estimates. Section~4 proves the optimal equivalence between the dual and maximal norms. Section~5 contains the H\"older-type inequality in maximal Lorentz norms and a comparison between the discrete and continuous constants. We restrict ourselves to the case $1<p,s<\infty$.

\section{Definitions and notation}

\noindent For $p>1$ and $a\in\R$, by $\ell^p(n^a)$ we denote the weighted Lebesgue space of sequences defined by
\[
\|x\|_{\ell^p(n^a)}=\left(\sum_{n=1}^\infty |x_n|^p n^a\right)^{1/p}<\infty.
\]
We denote by $\ell^{p,\downarrow}(n^a)$ the cone of nonnegative,  nonincreasing sequences from $\ell^p(n^a)$.

Since all Lorentz norms considered in this paper are rearrangement invariant, it is enough to work with nonnegative nonincreasing sequences:
\[
   x_1\ge x_2\ge \cdots \ge 0.
\]

\noindent With the notation in \eqref{icsn}, for $1\le s<\infty$, we have
\[
   \|x\|_{p,s}^{*}
   := \left( \sum_{n=1}^\infty n^{\frac{s}{p}-1}
            \left(\frac{X_n}{n}\right)^s \right)^{1/s}
    = \left( \sum_{n=1}^\infty n^{-\frac{s}{p'}-1} X_n^s\right)^{1/s},
\]
and, for $s=\infty$,
\[
   \|x\|_{p,\infty}^{*}
   := \sup_{n\ge1} n^{-1/p'} X_n.
\]

For two nonnegative sequences $x=(x_n)_n$ and $z=(z_n)_n$ we write $x\prec z$ if
\[
\sum_{i=1}^{n}x_i\le\sum_{i=1}^{n}z_i,
\qquad n\ge 1.
\]

The \emph{level sequence} $x^\circ=(x_n^\circ)_n$ of $x$ with respect to $\varphi$ is defined in \cite[Section~3]{BarzaMarcociPersson2012}. We recall the following result.

\begin{theorem}[Theorem 3.3, \cite{BarzaMarcociPersson2012}]\label{level functions}
Let $\varphi=(\varphi_n)_{n}$ be a positive sequence and set
\[
\Phi_n=\sum_{i=1}^n \varphi_i.
\]
Let $x=(x_n)_n$ be a nonnegative sequence and suppose that
\begin{equation}\label{limita}
\lim_{n\rightarrow \infty} \frac{\sum_{i=1}^n x_i}{\Phi_n}=0.
\end{equation}
Then there exists a unique nonnegative sequence $x^\circ=(x^\circ_n)_n$ satisfying the following conditions:
\begin{enumerate}
\item[(a)] $\left(x_n^\circ/\varphi_n\right)_n$ is decreasing;
\item[(b)] $x \prec x^\circ$;
\item[(c)] the set $\{n:\, x^\circ_n\neq x_n\}$ is a union of disjoint intervals $I_k=\{M_k,\dots, N_k\}$ such that
\[
\sum_{i\in I_k}x_i=\sum_{i\in I_k}x_i^\circ
\]
and, for some constant $\lambda_k>0$,
\[
\frac{x_i^\circ}{\varphi_i}=\lambda_k,
\qquad i\in I_k.
\]
\end{enumerate}
\end{theorem}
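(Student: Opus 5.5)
Our plan is to construct $x^\circ$ as the derivative, with respect to the "measure" $\varphi$, of the least concave majorant of the partial-sum sequence viewed in the $\Phi$-scale, and then to read off (a)--(c) from the geometry of that majorant. We set $\Phi_0=0$, $X_0=0$, $X_n=\sum_{i\le n}x_i$, and consider the points $P_n=(\Phi_n,X_n)$, $n\ge0$, in the plane. We let $F$ be the least concave majorant on $[0,\lim\Phi_n)$ of the piecewise linear function $G$ interpolating the $P_n$. Condition \eqref{limita} says $X_n/\Phi_n\to0$, so $G$ grows sublinearly, and $F$ is finite. Indeed, for each $\varepsilon>0$ the line $t\mapsto C_\varepsilon+\varepsilon t$ majorizes $G$, so $F(t)\le C_\varepsilon+\varepsilon t$. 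If $\sum\varphi_i<\infty$, then \eqref{limita} forces $X_n\to 0$, i.e.\ $x=0$, and we take $x^\circ=0$. So we may assume $\Phi_n\to\infty$. We then define $x^\circ_n=F(\Phi_n)-F(\Phi_{n-1})$. Since $F$ is concave and nondecreasing ($F\ge0$, and the slopes tend to $\le\varepsilon$ for every $\varepsilon$), we get $x^\circ_n\ge0$.

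The verification then proceeds in order. For (a), $x^\circ_n/\varphi_n$ is the chord slope of $F$ on $[\Phi_{n-1},\Phi_n]$, and chord slopes of a concave function over consecutive intervals decrease. For (b), $\sum_{i\le n}x^\circ_i=F(\Phi_n)-F(0)$, and $F(0)=G(0)=0$: a concave majorant may be taken to agree at the endpoint, since $G(0)=0$ and $G\ge 0$. Hence this sum equals $F(\Phi_n)\ge X_n$. For (c), the set $\{t: F(t)>G(t)\}$ is open, and on each of its components $F$ is affine, because a least concave majorant is affine off the contact set. Since $G$ is piecewise linear with nodes at the $\Phi_n$, the components have endpoints at nodes $\Phi_{M_k-1},\Phi_{N_k}$. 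These are finite: $F-G$ cannot stay positive on an unbounded interval, since there $F$ would be affine with slope $\lambda$, while $G$ has slopes $x_n/\varphi_n$, and sublinearity together with the contact points forces a return. Each such component gives the interval $I_k=\{M_k,\dots,N_k\}$. Then $x^\circ_i/\varphi_i=\lambda_k$ is the slope there, and $\sum_{I_k}x^\circ_i=F(\Phi_{N_k})-F(\Phi_{M_k-1})=X_{N_k}-X_{M_k-1}=\sum_{I_k}x_i$. At indices outside all $I_k$ both endpoints are contact nodes, so $x^\circ_n=x_n$. Positivity $\lambda_k>0$ holds because, if $F$ had slope $0$ on a component, then by concavity $F$ would be constant afterwards, which contradicts $F(\Phi_{N_k})=X_{N_k}$. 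The only exception is the trivial case in which $x$ vanishes from that point on, and there equality with $x$ already holds.

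For uniqueness we argue that any $y$ satisfying (a)--(c) has $Y_n=\sum_{i\le n}y_i$ which, interpolated in the $\Phi$-scale, gives a concave function $H$ by (a). By (b) we have $H\ge G$ at the nodes, hence everywhere, since $G$ is linear between nodes. So $H\ge F$. Conversely, (c) shows that $H$ touches $G$ at the endpoints of each $I_k$ and is affine in between, and it equals $G$ off $\bigcup I_k$. Thus $H$ is a concave function that coincides with $G$ on a set whose complement consists of intervals on which $H$ is affine. Any concave majorant of $G$ lies above the chords of $G$ on these intervals, i.e.\ above $H$, so $H\le F$. Hence $H=F$ and $y=x^\circ$.

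The main obstacle we expect is the infinite-tail behaviour. We must show that the noncontact set has only bounded components, so that the $I_k$ are finite intervals as (c) requires, and that $F$ is finite and attains $G$ at $0$. This is exactly where \eqref{limita} enters. Our first attempt is the argument above: if $F>G$ on $[\Phi_{M-1},\infty)$, then $F$ is affine there with slope $\lambda\ge0$. Sublinearity of $G$ lets us lower the slope slightly while still majorizing $G$, which contradicts minimality unless $\lambda=0$. If $\lambda=0$, minimality similarly forces $F=\sup_{m\ge M}X_m$. This sup is approached but perhaps not attained, in which case we treat the tail as an infinite interval with $\lambda=0$. Whether this is compatible with the requirement $\lambda_k>0$ needs care. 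We would then verify that, under \eqref{limita}, a sup that is not attained forces $\lambda=0$ only when $x_n=0$ eventually, in which case there is no genuine discrepancy.
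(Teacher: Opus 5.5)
The paper gives no proof of this statement; it quotes it from \cite{BarzaMarcociPersson2012}, so your argument has to stand on its own. Its architecture is the standard one: the level sequence is the $\Phi$-derivative of the least concave majorant $F$ of the partial-sum polygon $G$. Most of it is sound, namely the sublinear bound on $F$, (a), (b), the fact that noncontact components are affine pieces with endpoints at nodes, and the uniqueness argument ($H$ is a concave majorant, so $H\ge F$; $H$ is assembled from chords of $G$, so $H\le F$). The genuine gap is the one you flag and leave open. You never exclude an unbounded noncontact component, or a component on which $F$ has slope $0$. The scenario you propose to ``treat'' (a supremum $\sup_{m\ge M}X_m$ approached but not attained, giving an infinite interval with $\lambda=0$) would violate (c), which demands finite $I_k$ and $\lambda_k>0$. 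Your positivity argument has the same hole: slope $0$ only yields $X_{N_k}=X_{M_k-1}$, which is not yet a contradiction with $F(\Phi_{N_k})=X_{N_k}$.

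The missing idea is simply that $G$ is nondecreasing, because $x\ge0$. Let $(\Phi_{M-1},\beta)$ be a component of $\{F>G\}$. Then $F$ is affine there with slope $\lambda\ge0$, and $F(\Phi_{M-1})=G(\Phi_{M-1})=X_{M-1}$.

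\begin{itemize}
\item If $\lambda=0$, then $F\equiv X_{M-1}\le G$ on the component, contradicting $F>G$.
\item If $\beta=\infty$, your bound $F(t)\le C_\varepsilon+\varepsilon t$ gives $\lambda\le\varepsilon$ for every $\varepsilon>0$. So $\lambda=0$, and the same contradiction applies.
\end{itemize}

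Hence every component is bounded with positive slope. No limiting supremum ever enters, because $X_n$ is monotone, and your ``exception'' cannot occur.

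Two smaller points.

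\begin{itemize}
\item The first component must start at a genuine contact point, which needs continuity of $F$ at $0$. The clean reason is that $c=\sup_n X_n/\Phi_n<\infty$, so $0\le F(t)\le ct$.
\item What your construction proves in (c) is that $x^\circ_n=x_n$ for $n\notin\bigcup_k I_k$, not equality of the two sets. This containment is the correct reading. For $\varphi_n\equiv1$ and $x=(0,1,2,0,0,\dots)$ one gets $x^\circ=(1,1,1,0,\dots)$ with $x^\circ_2=x_2$, so literal equality fails. The containment is also exactly what your uniqueness argument uses.
\end{itemize}
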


The following representation of the dual norm was proved in \cite{BarzaMarcociPersson2012}.

\begin{theorem}[Theorem 4.2, \cite{BarzaMarcociPersson2012}]\label{t3}
Let $1<p<s\le \infty$ and let $x=(x_n)_n\in\ell^{p,s}$ be nonnegative and nonincreasing. Let
$\alpha$ and $\varphi$ be as defined in (\ref{fiandalfa}). 
Then
\[
\|x\|'_{p,s}=\|x^\circ\|_{p,s},
\]
where $x^\circ=(x^\circ_n)_n$ is the level sequence of $x$ with respect to $\varphi$.
\end{theorem}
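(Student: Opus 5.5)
The plan is to prove the two inequalities $\|x\|'_{p,s}\le\|x^\circ\|_{p,s}$ and $\|x\|'_{p,s}\ge\|x^\circ\|_{p,s}$ separately. Two preliminary observations come first.

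\emph{Existence of $x^\circ$.} Since $1<p<s$, we have $s'<p'$, so $\alpha=1-s'/p'\in(0,1)$ and $\Phi_n\asymp n^{s'/p'}$. For $x\in\ell^{p,s}$ nonincreasing, $X_n\le C n^{1/p'}\|x\|_{p,\infty}\le C'n^{1/p'}\|x\|_{p,s}$. Because $s'>1$, we get $X_n/\Phi_n\to0$, so \eqref{limita} holds and Theorem~\ref{level functions} gives $x^\circ$.

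\emph{Monotonicity of $x^\circ$.} Since $\varphi$ is decreasing and $(x^\circ_n/\varphi_n)_n$ is decreasing, $x^\circ_n=\varphi_n\cdot(x^\circ_n/\varphi_n)$ is nonincreasing. Hence $\|x^\circ\|_{p,s}$ is given by the defining sum without rearranging.

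\textbf{Upper bound.} By the Hardy--Littlewood rearrangement inequality, the supremum defining $\|x\|'_{p,s}$ may be taken over nonnegative nonincreasing $y$ with $\|y\|_{p',s'}\le1$. For such $y$, Abel summation gives
\[
\sum_n x_ny_n=\sum_n X_n(y_n-y_{n+1})\le\sum_n X^\circ_n(y_n-y_{n+1})=\sum_n x^\circ_ny_n,
\]
using $x\prec x^\circ$ (condition (b)) and $y_n-y_{n+1}\ge0$. The boundary term $X_Ny_{N+1}$ needs care. I would first work with finitely supported $y$, where the boundary term vanishes, and then use a density argument in $\ell^{p',s'}$. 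Next, write
\[
x^\circ_ny_n=\bigl(x^\circ_n n^{1/p-1/s}\bigr)\bigl(y_n n^{1/p'-1/s'}\bigr).
\]
The exponents satisfy $1/p-1/s+1/p'-1/s'=0$, so H\"older's inequality with exponents $s,s'$ gives $\sum x^\circ_ny_n\le\|x^\circ\|_{p,s}\|y\|_{p',s'}$. This proves $\|x\|'_{p,s}\le\|x^\circ\|_{p,s}$.

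\textbf{Lower bound.} I would test with the H\"older extremal for $x^\circ$, namely $y_n=c\,(x^\circ_n)^{s-1}n^{s/p-1}$. Writing $x^\circ_n=\varphi_n\,(x^\circ_n/\varphi_n)$ gives
\[
y_n=c\,\bigl(x^\circ_n/\varphi_n\bigr)^{s-1}n^{-\alpha(s-1)+s/p-1}.
\]
Since $s'(s-1)=s$, we have $\alpha(s-1)=s-1-s/p'=s/p-1$, so the power of $n$ is exactly $0$. Hence $y_n=c\,(x^\circ_n/\varphi_n)^{s-1}$, which is nonincreasing by (a) and constant on each interval $I_k$ by (c). It follows that
\[
\sum_n x_ny_n=\sum_n x^\circ_ny_n.
\]
Indeed, outside $\bigcup_k I_k$ we have $x=x^\circ$, and on each $I_k$ the factor $y$ is constant while $\sum_{I_k}x_i=\sum_{I_k}x^\circ_i$. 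With $c$ chosen so that $\|y\|_{p',s'}=1$, equality holds in H\"older's inequality, so $\sum x_ny_n=\|x^\circ\|_{p,s}$. This gives the reverse inequality.

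\textbf{Main obstacle.} The hard step is justifying the lower bound when $\|x^\circ\|_{p,s}$ might a priori be infinite, or when some interval $I_k$ is unbounded, so that the normalization and the block identity involve infinite sums. My first attempt would be to truncate. Replace $x$ by $x\chi_{\{1,\dots,N\}}$; its level sequence has only finitely many blocks on $\{1,\dots,N\}$ and its values are eventually zero or a tail of $\varphi$. Run the argument for the truncation, using $\|x\chi_{\{1,\dots,N\}}\|'_{p,s}\le\|x\|'_{p,s}$. Then pass to the limit $N\to\infty$ via the monotone convergence $(x\chi_{\{1,\dots,N\}})^\circ\to x^\circ$. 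This convergence follows from the characterization of the level sequence through the least concave majorant of $X$ with respect to $\Phi$, together with Fatou's lemma. If that convergence is awkward, I would instead use finitely supported $y$ in the upper bound and the uniqueness in Theorem~\ref{level functions} to identify the limit.
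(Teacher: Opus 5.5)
The paper does not prove this statement; it quotes it from \cite{BarzaMarcociPersson2012}, so your argument is judged on its own. For $1<p<s<\infty$ it is the standard level-function duality argument, and it is correct. Abel summation together with $x\prec x^\circ$ gives $\|x\|'_{p,s}\le\|x^\circ\|_{p,s}$. For the reverse inequality, the identity $\alpha(s-1)=s/p-1$ turns the H\"older extremal into $(x^\circ_n/\varphi_n)^{s-1}$, which is nonincreasing and constant on the blocks.

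\emph{The gap is the endpoint $s=\infty$.} The statement includes it, but your proof silently excludes it: the existence step uses $s'>1$, and the H\"older step with exponents $(s,s')$ and the extremal $(x^\circ_n)^{s-1}n^{s/p-1}$ have no meaning at $s=\infty$. There $s'=1$, $\varphi_n=n^{-1/p}$ and $\Phi_n\asymp n^{1/p'}$. So $x\in\ell^{p,\infty}$ only gives $X_n/\Phi_n=O(1)$, condition \eqref{limita} can fail (already for $x=\varphi$), and the existence of $x^\circ$ must be assumed rather than derived.

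Granting existence, you need a separate argument. For finitely supported nonincreasing $y\ge0$, Abel summation gives $\sum_n x_ny_n=\sum_n X_n(y_n-y_{n+1})$ and $\|y\|_{p',1}=\sum_n\Phi_n(y_n-y_{n+1})$. Hence $\|x\|'_{p,\infty}=\sup_n X_n/\Phi_n$. On the other hand, $\|x^\circ\|_{p,\infty}=\sup_n x^\circ_n/\varphi_n=x^\circ_1$. Condition (b) gives $X_n\le X^\circ_n\le x^\circ_1\Phi_n$, with equality at $n=1$ if $1$ lies in no block, or at the right endpoint of the block containing $1$. This case is never used in the present paper, which restricts to $s<\infty$.

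\emph{On the step you flag as the main obstacle.} Truncating $x$ can be made to work, but it relies on facts not contained in Theorem~\ref{level functions} as quoted: the least-concave-majorant description of $x^\circ$ and the convergence $(x\chi_{\{1,\dots,N\}})^\circ\to x^\circ$. It is simpler to truncate the test sequence. Once \eqref{limita} is checked, Theorem~\ref{level functions} gives finite blocks $I_k=\{M_k,\dots,N_k\}$, so your worry about unbounded blocks is moot. Consequently there are infinitely many $N$ such that $\{1,\dots,N\}$ is a union of whole blocks and of points where $x_n=x^\circ_n$.

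For such $N$, put $y^{(N)}_n=(x^\circ_n/\varphi_n)^{s-1}$ for $n\le N$ and $y^{(N)}_n=0$ otherwise. This sequence is nonincreasing and finitely supported. Your block identity gives $\sum_n x_ny^{(N)}_n=\sum_{n\le N}x^\circ_ny^{(N)}_n=A_N$, where $A_N=\sum_{n\le N}n^{s/p-1}(x^\circ_n)^s$, and your exponent computation gives $\|y^{(N)}\|_{p',s'}=A_N^{1/s'}$. Hence $A_N^{1/s}\le\|x\|'_{p,s}$. Letting $N\to\infty$ yields $\|x^\circ\|_{p,s}\le\|x\|'_{p,s}\le\|x\|_{p,s}<\infty$. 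This needs no a priori finiteness of $\|x^\circ\|_{p,s}$ and no normalization.
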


\section{Reversed Hardy inequalities with power weights for decreasing sequences}

\noindent To establish inequalities relating the maximal Lorentz norm to the standard Lorentz norm, one needs reversed Hardy-type inequalities for nonincreasing sequences. In the discrete setting these are also known as Ces\`aro inequalities. For functions, related results are classical; see, for example, \cite{HLP} and \cite{KMP}. For sequences, the unweighted case appears in \cite{R}; discrete factorization methods for classical inequalities are systematically developed in \cite{B}. The weighted discrete case requires some special attention since increasing and decreasing weights lead to different constants.

\noindent In this section we write
\[
T_n(q)=\sum_{k=n+1}^{\infty}\frac{1}{k^q},
\qquad
S_n(q)=\sum_{k=1}^{n}\frac{1}{k^q},
\qquad q>1.
\]
For a fixed $K\ge1$, let $u^K=(u_n^K)_n$ be the sequence
\begin{equation}\label{unK}
 u_n^K=
 \begin{cases}
    1, & 1\le n\le K,\\
    0, & n>K.
    \end{cases}
\end{equation}

\subsection{Increasing power weights}

We begin with the case $0\le a<p-1$. We shall need the following lemma from \cite{R}.

\begin{lemma}[\cite{R}, Lemma 2]\label{Lem1R}
For $n\ge2$ and $q>1$,
\[
 \left(n^q-(n-1)^q-1\right)T_{n-1}(q)\ge S_{n-1}(q).
\]
\end{lemma}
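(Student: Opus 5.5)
The plan is to recast the inequality as a statement about the sequence
\[
a_n:=n^qT_{n-1}(q)=\sum_{k\ge n}\Bigl(\frac nk\Bigr)^q=\sum_{j\ge0}\Bigl(1+\frac jn\Bigr)^{-q},\qquad n\ge1 .
\]
Since $S_{n-1}(q)+T_{n-1}(q)=\zeta(q)$, the claimed inequality is equivalent to
\[
n^qT_{n-1}(q)-(n-1)^qT_{n-1}(q)\ge \zeta(q).
\]
Moreover $(n-1)^qT_{n-1}(q)=(n-1)^q\bigl(T_{n-2}(q)-(n-1)^{-q}\bigr)=a_{n-1}-1$, so the inequality becomes
\[
a_n-a_{n-1}\ge \zeta(q)-1=T_1(q),\qquad n\ge2 .
\]
Note that $a_1=\zeta(q)$, and that $a_n\sim n/(q-1)$ because $a_n/n$ is a Riemann sum for $\int_0^\infty(1+t)^{-q}\,dt=1/(q-1)$. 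So the task is a lower bound for the increments of $a_n$.

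\emph{Step 1 (base case $n=2$).} Here the claim reads $(2^q-2)T_1(q)\ge1$. A crude integral bound does not suffice for large $q$. I would instead keep the first terms exactly:
\[
(2^q-2)T_1(q)=1-2^{1-q}+(2^q-2)\sum_{k\ge3}k^{-q}.
\]
It then remains to show $(2^q-2)\sum_{k\ge3}k^{-q}\ge 2^{1-q}$. I would bound $\sum_{k\ge3}k^{-q}$ from below by $3^{-q}+\int_4^\infty x^{-q}\,dx$, or by $\int_3^\infty x^{-q}\,dx$. The first choice should cover large $q$, since $(2/3)^q$ eventually beats $2^{1-q}$. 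The second should cover $q$ near $1$, since both sides then behave like $1/(q-1)$ against a constant. Splitting at a convenient value such as $q=2$ should work, with an elementary check in the middle range.

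\emph{Step 2 (monotonicity of the increments).} I would show that $d_n:=a_n-a_{n-1}$ is nondecreasing in $n\ge2$, that is, that $(a_n)$ is discretely convex. Then $d_n\ge d_2\ge T_1(q)$ by Step~1. To get convexity, write $d_n=\sum_{j\ge0}\bigl[(1+j/n)^{-q}-(1+j/(n-1))^{-q}\bigr]$. Each bracket is an increment of the function $t\mapsto(1+j/t)^{-q}=\bigl(t/(t+j)\bigr)^q$. I would study the convexity of $F(t)=\sum_{j\ge0}\bigl(t/(t+j)\bigr)^q$ on $[1,\infty)$. Individual summands are not convex for all $t$, so rather than arguing termwise I would use Euler--Maclaurin, or the representation
\[
F(t)=\frac{1}{\Gamma(q)}\int_0^\infty u^{q-1}\frac{t^q e^{-tu}}{1-e^{-u}}\,du ,
\]
obtained from $k^{-q}=\Gamma(q)^{-1}\int_0^\infty u^{q-1}e^{-ku}\,du$ and valid at integer $t=n$. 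Substituting $u=v/n$ turns this into
\[
a_n=\frac{1}{\Gamma(q)}\int_0^\infty v^{q-1}e^{-v}\,\frac{1}{1-e^{-v/n}}\,dv .
\]
Convexity in $n$ of $n\mapsto(1-e^{-v/n})^{-1}$ for each fixed $v>0$ would then give convexity of $a_n$ directly. This reduces to a one-variable calculus check on $g(y)=(1-e^{-1/y})^{-1}$.

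\emph{Main obstacle.} I expect the convexity check in Step~2 to be the hardest part. Note that $g(y)\approx y+\tfrac12+\tfrac1{12y}$, so $g$ is convex at least asymptotically, but this must be verified for all $y>0$. If $g''\ge0$ turns out to be delicate, I would fall back to proving $d_n\ge T_1(q)$ directly from the integral formula. That formula gives
\[
d_n=\frac{1}{\Gamma(q)}\int_0^\infty v^{q-1}e^{-v}\bigl[g(n/v)-g((n-1)/v)\bigr]\,dv ,
\]
and the mean value theorem combined with $g'\ge g'(0^+)$ (if $g'$ is monotone) would give the bound.
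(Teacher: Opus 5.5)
The paper gives no proof of this lemma; it is quoted from \cite{R}. Your argument is therefore an independent, self-contained route, and it works.

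The reduction is correct: with $a_n=n^qT_{n-1}(q)$ the claim is equivalent to $a_n-a_{n-1}\ge\zeta(q)-1$. The representation
\[
a_n=\frac1{\Gamma(q)}\int_0^\infty v^{q-1}e^{-v}\,g(n/v)\,dv,\qquad g(y)=\bigl(1-e^{-1/y}\bigr)^{-1},
\]
is also correct, and it holds for every real $t>0$ in place of $n$; it equals $t^q\zeta(q,t)$, the Hurwitz zeta function. The integrand is dominated by $v^{q-1}e^{-v}(1+t/v)$, which is integrable because $q>1$.

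What you call the main obstacle closes in two lines. Put $E=e^{-1/y}$. Then
\[
g''(y)=\frac{E\bigl(1+E-2y(1-E)\bigr)}{y^4(1-E)^3},
\]
and $1+E-2y(1-E)\ge0$ is exactly $\tanh\frac1{2y}\le\frac1{2y}$. So $g$ is convex, and hence $t\mapsto a(t)$ is convex. Your fallback would not have helped: $g'(0^+)=0$, so the mean-value bound only gives $d_n\ge0$.

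In fact Step~1 can be dropped entirely. Since $g(0^+)=1$, dominated convergence gives $a(0^+)=\Gamma(q)^{-1}\int_0^\infty v^{q-1}e^{-v}\,dv=1$, while $a(1)=\zeta(q)$. The continuous extension of $a$ to $[0,\infty)$ is still convex, so for every $n\ge2$
\[
a_n-a_{n-1}\ge a(1)-a(0)=\zeta(q)-1=T_1(q).
\]
This is the lemma, including the case $n=2$.

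If you keep Step~1 anyway, correct the heuristic. As $q\to1^+$, the quantity $(2^q-2)\int_3^\infty x^{-q}\,dx$ tends to $2\ln 2$, not to something of order $1/(q-1)$. This single integral bound already covers all $q>1$: with $t=q-1$ it reduces to $2\bigl((4/3)^t-(2/3)^t\bigr)\ge t$, which holds because both sides vanish at $t=0$ and the left side has the larger derivative for all $t\ge0$.

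Compared with the bare citation, your route gives a structural reason for the inequality. One has $(n^q-(n-1)^q)T_{n-1}(q)=a_n-a_{n-1}+1$. This dominates $\zeta(q)$ because $t\mapsto t^q\zeta(q,t)$ is convex and takes the value $1$ at $t=0$ and $\zeta(q)$ at $t=1$. The same argument also gives the inequality for real $t\ge1$, not only for integers.
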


\noindent As a direct consequence of Lemma~\ref{Lem1R}, we obtain the following weighted version.

\begin{lemma}\label{Lem1}
For $n\ge 2$, $p>1$, and $0\le a<p-1$,
\[
 \left(n^p-(n-1)^p-n^a\right)T_{n-1}(p-a)
 \ge n^a S_{n-1}(p-a).
\]
\end{lemma}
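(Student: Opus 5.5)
Apply Lemma~\ref{Lem1R} with $q=p-a>1$ (valid since $a<p-1$) and reduce the claim to a purely numerical inequality. Lemma~\ref{Lem1R} gives
\[
\left(n^{p-a}-(n-1)^{p-a}-1\right)T_{n-1}(p-a)\ge S_{n-1}(p-a),\qquad n\ge 2 .
\]
Multiplying by $n^a>0$, it suffices to show
\[
n^p-(n-1)^p-n^a \;\ge\; n^a\left(n^{p-a}-(n-1)^{p-a}-1\right)=n^p-n^a(n-1)^{p-a}-n^a,
\]
since $T_{n-1}(p-a)>0$. The $n^p$ and $-n^a$ terms cancel, and the claim reduces to
\[
(n-1)^p\le n^a (n-1)^{p-a},\quad\text{i.e.}\quad (n-1)^a\le n^a,
\]
which holds because $a\ge 0$ and $n-1\ge 1>0$.

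The argument is: (i) check that $q=p-a>1$, so that Lemma~\ref{Lem1R} applies and $T_{n-1}(q)$ converges; (ii) multiply its inequality by $n^a$; (iii) compare the coefficients of $T_{n-1}(p-a)$ on the two sides using monotonicity of $t\mapsto t^a$; (iv) chain the inequalities, using the positivity of $T_{n-1}(p-a)$. Nothing is a real obstacle. The only point needing care is that the coefficient comparison goes in the right direction, which needs the weight exponent $a$ to be nonnegative. This is exactly where the hypothesis $a\ge 0$ enters, and it is why decreasing weights are treated separately.
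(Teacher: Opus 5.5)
Your proposal is correct and follows essentially the paper's argument: the paper divides by $n^a$ instead of multiplying Lemma~\ref{Lem1R} by $n^a$. Either way, the coefficient comparison reduces to $(n-1)^a\le n^a$ for $a\ge 0$, followed by Lemma~\ref{Lem1R} with $q=p-a>1$ and the positivity of $T_{n-1}(p-a)$.
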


\begin{proof}
After division by $n^a$, the desired inequality becomes
\[
\left(n^{p-a}-(n-1)^{p-a}\frac{(n-1)^a}{n^a}-1\right)T_{n-1}(p-a)
\ge S_{n-1}(p-a).
\]
Since $a\ge0$, we have $(n-1)^a/n^a\le1$, and hence
\[
n^{p-a}-(n-1)^{p-a}\frac{(n-1)^a}{n^a}-1
\ge
n^{p-a}-(n-1)^{p-a}-1.
\]
As $p-a>1$, the desired estimate now follows from Lemma~\ref{Lem1R} applied with $q=p-a$.
\end{proof}

\noindent The following elementary lemma is the discrete convexity estimate used in the proof of the reversed Hardy inequality.

\begin{lemma}[\cite{R}, Lemma 1]\label{LemR}
Let $p>1$ and let $x_1\ge x_2\ge\cdots\ge x_n\ge0$. Then, for $n\ge2$,
\[
(x_1+x_2+\cdots+x_n)^p-(x_1^p+x_2^p+\cdots+x_n^p)
\ge
\sum_{k=2}^n (k^p-(k-1)^p-1)x_k^p.
\]
\end{lemma}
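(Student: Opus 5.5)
We argue by induction on $n$, peeling off the smallest term $x_n$. The case $n=2$ is immediate: $(x_1+x_2)^p-x_1^p-x_2^p\ge (2^p-2)x_2^p$. To see this, divide by $x_2^p$ and put $t=x_1/x_2\ge1$. The function $g(t)=(t+1)^p-t^p$ is increasing for $p>1$, so $g(t)\ge g(1)=2^p-1$, which gives the claim (and $x_2=0$ is trivial).

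For the inductive step, write $A=x_1+\cdots+x_{n-1}$, so that the left-hand side for $n$ equals
\[
\bigl[A^p-(x_1^p+\cdots+x_{n-1}^p)\bigr]+\bigl[(A+x_n)^p-A^p-x_n^p\bigr].
\]
By the induction hypothesis, the first bracket is at least $\sum_{k=2}^{n-1}(k^p-(k-1)^p-1)x_k^p$. It therefore suffices to prove
\[
(A+x_n)^p-A^p-x_n^p\ge \bigl(n^p-(n-1)^p-1\bigr)x_n^p.
\]
Monotonicity gives $A\ge (n-1)x_n$. Setting $h(A)=(A+x_n)^p-A^p$, we have $h'(A)=p\bigl((A+x_n)^{p-1}-A^{p-1}\bigr)\ge0$ because $p>1$, so $h$ is nondecreasing in $A$. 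Hence
\[
h(A)\ge h\bigl((n-1)x_n\bigr)=\bigl(n^p-(n-1)^p\bigr)x_n^p,
\]
and subtracting $x_n^p$ gives exactly the required inequality. This closes the induction.

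No step is a real obstacle. The only point needing care is to use just the bound $A\ge(n-1)x_n$, which follows from monotonicity, together with the monotonicity of $h$, which holds because $t\mapsto t^{p-1}$ is nondecreasing.
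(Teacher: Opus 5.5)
Your proof is correct and complete. The paper gives no proof of this lemma; it quotes it from Renaud's paper (reference [R]), so there is no in-paper argument to compare against. Your induction is the standard argument for it: you split off $x_n$, use $A=x_1+\cdots+x_{n-1}\ge (n-1)x_n$, and use that $A\mapsto (A+x_n)^p-A^p$ is nondecreasing because $t\mapsto t^{p-1}$ is nondecreasing.

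Two small remarks:

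\begin{itemize}
\item Your separate base case is redundant. The inductive step with $n=2$, started from the trivial case $n=1$ where both sides vanish, is exactly your $n=2$ computation.
\item If you unroll the induction, it becomes a telescoping estimate. With $X_k=x_1+\cdots+x_k$ and $X_0=0$, it reads $X_k^p-X_{k-1}^p\ge (k^p-(k-1)^p)\,x_k^p$. Summing gives $X_n^p\ge\sum_{k=1}^n (k^p-(k-1)^p)\,x_k^p$. This is precisely the form in which the paper uses the lemma, in the proof of Lemma~\ref{layer-average-lemma}, so your argument yields that form directly without rearranging.
\end{itemize}
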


\noindent We shall use the following weighted analogue.

\begin{lemma}\label{LemR2}
Let $p>1$, $0\le a<p-1$, $n\ge2$, and let $x_1\ge x_2\ge\cdots\ge x_n\ge0$. Then
\begin{align*}
&(x_1+x_2+\cdots+x_n)^p-(x_1^p+2^a x_2^p+\cdots+n^a x_n^p) \\
&\hspace{2cm}\ge
\sum_{k=2}^n (k^p-(k-1)^p-k^a)x_k^p.
\end{align*}
\end{lemma}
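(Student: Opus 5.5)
The plan is to reduce Lemma~\ref{LemR2} to Lemma~\ref{LemR} by a rearrangement of terms; no new inequality is needed. Move the weighted sum to the right-hand side. The claim is then equivalent to
\[
(x_1+\cdots+x_n)^p \ge x_1^p+\sum_{k=2}^n k^a x_k^p+\sum_{k=2}^n (k^p-(k-1)^p-k^a)x_k^p .
\]
The weights $k^a$ cancel termwise, so the right-hand side equals
\[
x_1^p+\sum_{k=2}^n (k^p-(k-1)^p)x_k^p .
\]

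Next, expand the right-hand side of Lemma~\ref{LemR} in the same way. Adding $\sum_{k=1}^n x_k^p$ to both sides of Lemma~\ref{LemR} gives
\[
(x_1+\cdots+x_n)^p\ge x_1^p+\sum_{k=2}^n\bigl(x_k^p+k^p-(k-1)^p-1\bigr)\ldots
\]
More precisely, each term $x_k^p+(k^p-(k-1)^p-1)x_k^p$ equals $(k^p-(k-1)^p)x_k^p$, so Lemma~\ref{LemR} is equivalent to
\[
(x_1+\cdots+x_n)^p\ge x_1^p+\sum_{k=2}^n (k^p-(k-1)^p)x_k^p .
\]
This is exactly the reformulated claim.

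So Lemma~\ref{LemR2} is an algebraic restatement of Lemma~\ref{LemR}, valid for every real $a$. The hypotheses $0\le a<p-1$ are not used here; they matter only later, for instance through Lemma~\ref{Lem1}, when the coefficients $k^p-(k-1)^p-k^a$ must be compared with the tail sums. The only point needing care is the bookkeeping for the index $k=1$: the term $x_1^p$ has weight $1^a=1$ and appears unchanged on both sides. I therefore do not expect any real obstacle.
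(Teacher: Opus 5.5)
Your proof is correct and is essentially the paper's argument: both note that the weights $k^a$ cancel, so the claim is Lemma~\ref{LemR} with $\sum_{k=2}^n (k^a-1)x_k^p$ moved across, and you are right that the hypothesis $0\le a<p-1$ is not used at this step. Do delete the garbled intermediate display ending in $\ldots$, since your ``more precisely'' line already states the reformulation of Lemma~\ref{LemR} correctly.
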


\begin{proof}
By Lemma~\ref{LemR},
\[
(x_1+\cdots+x_n)^p-\sum_{k=1}^n x_k^p
\ge
\sum_{k=2}^n\bigl(k^p-(k-1)^p-1\bigr)x_k^p.
\]
Since
\[
\sum_{k=1}^n k^a x_k^p
=
\sum_{k=1}^n x_k^p+
\sum_{k=2}^n (k^a-1)x_k^p,
\]
we obtain
\begin{align*}
&(x_1+\cdots+x_n)^p-\sum_{k=1}^n k^a x_k^p \\
&\quad\ge
\sum_{k=2}^n\bigl(k^p-(k-1)^p-1\bigr)x_k^p
-
\sum_{k=2}^n(k^a-1)x_k^p \\
&\quad=
\sum_{k=2}^n\bigl(k^p-(k-1)^p-k^a\bigr)x_k^p,
\end{align*}
which completes the proof. 
\end{proof}

\noindent The next theorem is a weighted reversed Hardy inequality for increasing power weights. The case $a=0$ was proved in \cite{R}.

\begin{theorem}\label{thH1}
Let $p>1$ and $0\le a<p-1$. If $x\in \ell^{p,\downarrow}(n^a)$, then
\[
\zeta(p-a)^{-1/p}\|Cx\|_{\ell^{p}(n^a)}
\ge
\|x\|_{\ell^{p}(n^a)}.
\]
The constant $\zeta(p-a)^{-1/p}$ is optimal.
\end{theorem}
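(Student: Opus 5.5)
We need $\sum_n n^{a-p} X_n^p \ge \zeta(p-a)\sum_n n^a x_n^p$, where $X_n=x_1+\cdots+x_n$. We may assume $x$ is finitely supported (or truncate and pass to the limit by monotone convergence), since both sides are monotone in truncation of a nonincreasing sequence.

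\textbf{Step 1: Abel summation.} Write $D_n = X_n^p-\sum_{k=1}^n k^a x_k^p$, with $D_1=0$, and
\[
\sum_{n\ge1} n^{a-p}X_n^p=\sum_{n\ge1} n^{-(p-a)}\sum_{k=1}^n k^a x_k^p+\sum_{n\ge2} n^{-(p-a)}D_n .
\]
Interchanging summation, the first sum equals
\[
\sum_{k\ge1}k^a x_k^p\bigl(S_\infty - S_{k-1}(p-a)\bigr)
=\sum_k k^a x_k^p\bigl(\zeta(p-a)-S_{k-1}(p-a)\bigr),
\]
where $S_0=0$. Hence it suffices to prove
\[
\sum_{n\ge2} n^{-(p-a)}D_n\ \ge\ \sum_{k\ge2} k^a x_k^p\, S_{k-1}(p-a).
\]

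\textbf{Step 2: Lemma~\ref{LemR2}.} For each $n\ge2$, Lemma~\ref{LemR2} gives
\[
D_n\ge\sum_{k=2}^n\bigl(k^p-(k-1)^p-k^a\bigr)x_k^p ,
\]
and the coefficients are nonnegative since $k^p-(k-1)^p\ge k^{p-1}\ge k^a$. Summing against $n^{-(p-a)}$ and interchanging gives
\[
\sum_{n\ge2} n^{-(p-a)}D_n\ge\sum_{k\ge2}\bigl(k^p-(k-1)^p-k^a\bigr)x_k^p\,\sum_{n\ge k}n^{-(p-a)}
=\sum_{k\ge2}\bigl(k^p-(k-1)^p-k^a\bigr)T_{k-1}(p-a)\,x_k^p .
\]

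\textbf{Step 3: Lemma~\ref{Lem1}.} Lemma~\ref{Lem1} gives, for each $k\ge2$,
\[
\bigl(k^p-(k-1)^p-k^a\bigr)T_{k-1}(p-a)\ge k^aS_{k-1}(p-a).
\]
Comparing termwise with the target in Step~1 closes the inequality. The only delicate point is bookkeeping: the index of $T$ must be $k-1$, which matches Lemma~\ref{Lem1} exactly, and the interchanges of sums are justified because all terms are nonnegative (the coefficients in Step~2 are nonnegative as shown).

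\textbf{Optimality.} Take $x=u^1$. Then $X_n=1$ for all $n$, so
\[
\|Cu^1\|^p_{\ell^p(n^a)}=\sum_n n^{a-p}=\zeta(p-a),
\qquad \|u^1\|^p_{\ell^p(n^a)}=1 ,
\]
and equality holds. Hence the constant $\zeta(p-a)^{-1/p}$ cannot be improved.
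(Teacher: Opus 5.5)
Your proposal is correct and follows the paper's proof step for step. It uses the same splitting of $X_n^p$ into $\sum_{k\le n}k^a x_k^p$ plus the remainder $D_n$ (the paper's $\varepsilon_n$), the interchange of summation, the lower bound from Lemma~\ref{LemR2}, the termwise comparison via Lemma~\ref{Lem1}, and sharpness from $u^1$; your extra checks (nonnegativity of $k^p-(k-1)^p-k^a$ to justify the interchanges, and the truncation reduction) are harmless details the paper leaves implicit.
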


\begin{proof}
Let $x\in \ell^{p,\downarrow}(n^a)$ be nonnegative and nonincreasing. Put
\[
\varepsilon_1=0
\]
and, for $n\ge2$,
\[
\varepsilon_n=(x_1+\cdots+x_n)^p-(x_1^p+2^a x_2^p+\cdots+n^a x_n^p).
\]
Then,
\begin{align*}
\|Cx\|^p_{\ell^p(n^a)}
&=\sum_{n=1}^{\infty}\frac{(x_1+\cdots+x_n)^p}{n^{p-a}}\\
&=\sum_{n=1}^{\infty} n^{-(p-a)}\sum_{k=1}^n k^a x_k^p
  +\sum_{n=2}^{\infty}\frac{\varepsilon_n}{n^{p-a}}.
\end{align*}
Changing the order of summation in the first term gives
\begin{align*}
\|Cx\|^p_{\ell^p(n^a)}
&=\sum_{k=1}^{\infty} k^a x_k^p\sum_{n=k}^{\infty} n^{-(p-a)}
  +\sum_{n=2}^{\infty}\frac{\varepsilon_n}{n^{p-a}}\\
&=\zeta(p-a)\|x\|_{\ell^p(n^a)}^p
  -\sum_{n=2}^{\infty} n^a S_{n-1}(p-a)x_n^p
  +\sum_{n=2}^{\infty}\frac{\varepsilon_n}{n^{p-a}}.
\end{align*}
By Lemma~\ref{LemR2},
\[
\varepsilon_n\ge \sum_{k=2}^n (k^p-(k-1)^p-k^a)x_k^p.
\]
Therefore, we get
\begin{align*}
\sum_{n=2}^{\infty}\frac{\varepsilon_n}{n^{p-a}}
&\ge
\sum_{n=2}^{\infty}\frac{1}{n^{p-a}}
\sum_{k=2}^n (k^p-(k-1)^p-k^a)x_k^p\\
&=\sum_{n=2}^{\infty}(n^p-(n-1)^p-n^a)T_{n-1}(p-a)x_n^p.
\end{align*}
By Lemma~\ref{Lem1},
\[
(n^p-(n-1)^p-n^a)T_{n-1}(p-a)
\ge n^aS_{n-1}(p-a).
\]
Consequently,
\[
\|Cx\|^p_{\ell^p(n^a)}
\ge
\zeta(p-a)\|x\|_{\ell^p(n^a)}^p,
\]
which is the desired inequality.

\noindent We can see that the constant is sharp, by testing the inequality on $u^1=(1,0,0,\ldots)$. Indeed,
\[
\|u^1\|_{\ell^p(n^a)}=1
\]
and
\[
\|Cu^1\|^p_{\ell^p(n^a)}=
\sum_{n=1}^{\infty}n^{a-p}=\zeta(p-a).
\]
\end{proof}

\begin{corollary}\label{Cor36}
If $1<p\le s<\infty$ and $x\in\ell^{p,s}$, then
\begin{equation}\label{cor36}
\zeta\!\left(\frac{s}{p'}+1\right)^{1/s}\|x\|_{p,s}
\le
\|x\|^*_{p,s}.
\end{equation}
The constant is optimal.
\end{corollary}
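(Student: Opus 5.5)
The plan is to reduce the corollary to Theorem~\ref{thH1}. The Lorentz norms are rearrangement invariant, so we may assume $x=x^*$ is nonnegative and nonincreasing. Take Theorem~\ref{thH1} with exponent $s$ in place of $p$ and weight exponent
\[
a=\frac{s}{p}-1 .
\]
With these choices,
\[
\|x\|_{p,s}^s=\sum_{n\ge1} n^{\frac sp-1}x_n^s=\|x\|_{\ell^s(n^a)}^s,
\]
and
\[
(\|x\|^*_{p,s})^s=\sum_{n\ge1} n^{\frac sp-1}\Bigl(\tfrac{X_n}{n}\Bigr)^s=\|Cx\|_{\ell^s(n^a)}^s .
\]
So $x\in\ell^{s,\downarrow}(n^a)$, and the corollary becomes exactly the inequality of Theorem~\ref{thH1} for this pair $(s,a)$.

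Next we check the hypotheses of Theorem~\ref{thH1}, now read as $s>1$ and $0\le a<s-1$.
\begin{itemize}
\item $s>1$ holds because $s\ge p>1$.
\item $a\ge0$ is equivalent to $s\ge p$, which is assumed.
\item $a<s-1$ is equivalent to $s/p<s$, i.e.\ to $p>1$.
\end{itemize}
The constant also matches:
\[
s-a=s-\frac sp+1=\frac{s}{p'}+1 .
\]
Theorem~\ref{thH1} therefore gives $\zeta\bigl(\tfrac{s}{p'}+1\bigr)\|x\|_{p,s}^s\le(\|x\|^*_{p,s})^s$, and taking $s$-th roots yields \eqref{cor36}.

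For optimality we test on $u^1=(1,0,0,\ldots)$, which is already nonincreasing, as in the proof of Theorem~\ref{thH1}. Here $\|u^1\|_{p,s}=1$, and the introduction computed
\[
(\|u^1\|_{p,s}^*)^s=\zeta\!\left(\tfrac{s}{p'}+1\right),
\]
so \eqref{cor36} holds with equality for $u^1$. No real obstacle is expected. The only points needing care are the bookkeeping of the exponents, in particular confirming that the restriction $s\ge p$ is exactly what keeps the weight increasing ($a\ge 0$), and the reduction to $x^*$.
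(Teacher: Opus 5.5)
Your proposal is correct and matches the paper's proof: you apply Theorem~\ref{thH1} with exponent $s$ and weight exponent $a=s/p-1\ge 0$, you use $s-a=s/p'+1$, and you test sharpness on $u^1$. Your explicit check that the hypothesis $0\le a<s-1$ is equivalent to $1<p\le s$ fills in a detail the paper leaves implicit.
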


\begin{proof}
The inequality follows from Theorem~\ref{thH1} applied with    
\[
a=\frac{s}{p}-1\ge0.
\]
Indeed,
\[
s-a=s-\left(\frac{s}{p}-1\right)=\frac{s}{p'}+1.
\]
The sharpness of the constant follows by testing with $u^1$, as in Theorem~\ref{thH1}.
\end{proof}

\subsection{Decreasing power weights}

The case $-1<a<0$ is different. In the continuous setting one obtains a simple limiting constant. In the discrete setting, however, the first atoms may give a larger ratio. For this reason the exact discrete constant is   expressed as a supremum over the block sequences $u^K$.
Reversed Hardy inequalities for nonincreasing sequences with power weights appear also in Bennett and Grosse-Erdmann \cite{BGE2015}; in particular, Theorem~3 there is closely related to Theorem~\ref{thH2} below. Our point here is the identification of the exact discrete constant as a supremum over blocks, and the fact that it generally differs from the continuous constant; see Remark~\ref{remark-h2-constant}.

For $r>1$ and $-1<a<0$, we define
\begin{equation}\label{def-hra}
\mathcal H_{r,a}^r
:=
\sup_{K\ge1}
\frac{\displaystyle\sum_{n=1}^K n^a}
{\displaystyle\sum_{n=1}^K n^a+K^r\sum_{n=K+1}^{\infty}n^{a-r}}.
\end{equation}
Then $0<\mathcal H_{r,a}<1$. Moreover, we can easily see that
\begin{equation}\label{h-lower-bounds}
\mathcal H_{r,a}^r
\ge
\max\left\{
\frac{1}{\zeta(r-a)},\frac{r-a-1}{r}
\right\},
\end{equation}
which follows, respectively, from $K=1$ and from the limit $K\to\infty$.

\begin{lemma}\label{layer-average-lemma}
Let $r>1$ and let $a_1,\ldots,a_n\ge0$. Then
\[
r\int_0^{\infty}t^{r-1}
\left(\sum_{k=1}^n \mathbf 1_{\{a_k>t\}}\right)^r\,dt
\le
\left(\sum_{k=1}^n a_k\right)^r.
\]
Consequently,
\[
r\int_0^{\infty}t^{r-1}
\left(\frac1n\sum_{k=1}^n \mathbf 1_{\{a_k>t\}}\right)^r\,dt
\le
\left(\frac1n\sum_{k=1}^n a_k\right)^r.
\]
\end{lemma}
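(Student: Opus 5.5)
Since both sides are symmetric in $a_1,\ldots,a_n$, I would first reorder so that $a_1\ge a_2\ge\cdots\ge a_n\ge0$, and put $a_{n+1}=0$. Write $N(t)=\sum_{k=1}^n\mathbf 1_{\{a_k>t\}}$ for the counting function. The plan is to compute the left-hand side exactly as a weighted sum of the $a_j^r$, and then to compare it with $(\sum a_k)^r$ by a telescoping argument over the partial sums.

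\emph{Evaluating the integral.} The function $N$ is a step function: $N(t)=j$ for $t\in[a_{j+1},a_j)$, $1\le j\le n$, and $N(t)=0$ for $t\ge a_1$. Hence
\[
r\int_0^\infty t^{r-1}N(t)^r\,dt=\sum_{j=1}^n j^r\bigl(a_j^r-a_{j+1}^r\bigr).
\]
Abel summation, using $a_{n+1}=0$, turns this into
\[
\sum_{j=1}^n \bigl(j^r-(j-1)^r\bigr)a_j^r .
\]
So it suffices to prove
\[
\sum_{j=1}^n \bigl(j^r-(j-1)^r\bigr)a_j^r\le\Bigl(\sum_{j=1}^n a_j\Bigr)^r
\]
for nonincreasing $a_j\ge0$.

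\emph{Telescoping comparison.} Let $A_j=a_1+\cdots+a_j$ and $A_0=0$, so that $(\sum a_j)^r=\sum_{j=1}^n (A_j^r-A_{j-1}^r)$. I will show each increment dominates the corresponding term:
\[
A_j^r-A_{j-1}^r=\int_{A_{j-1}}^{A_{j-1}+a_j} r u^{r-1}\,du\;\ge\;\int_{(j-1)a_j}^{ja_j} r u^{r-1}\,du=\bigl(j^r-(j-1)^r\bigr)a_j^r .
\]
Both integrals run over intervals of the same length $a_j$. By monotonicity, $A_{j-1}\ge (j-1)a_j$, so the first interval is a right translate of the second. Since $r>1$, the integrand $ru^{r-1}$ is nondecreasing on $[0,\infty)$, which gives the inequality. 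Summing over $j$ yields the first claim.

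The second claim follows by dividing both sides by $n^r$, since $n^{-r}N(t)^r=(N(t)/n)^r$. The only point requiring care is the step-function evaluation together with the Abel summation, in particular the boundary term. The comparison step itself is the key idea: it shifts each layer of width $a_j$ to the right of the previous partial sum.
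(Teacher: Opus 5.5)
Your proof is correct and follows the paper's route: the same reordering, the same evaluation of the integral as $\sum_{j=1}^n j^r(a_j^r-a_{j+1}^r)=\sum_{j=1}^n (j^r-(j-1)^r)a_j^r$, and then a bound by $(\sum_j a_j)^r$. The only difference is that the paper gets this last bound by citing Lemma~\ref{LemR} with $p=r$. You instead prove it directly through the comparison $A_j^r-A_{j-1}^r\ge\int_{(j-1)a_j}^{ja_j}ru^{r-1}\,du$, which is a valid self-contained proof of that inequality.
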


\begin{proof}
After rearranging the numbers, assume $a_1\ge\cdots\ge a_n\ge0$ and put $a_{n+1}=0$. Then
\begin{align*}
r\int_0^{\infty}t^{r-1}
\left(\sum_{k=1}^n \mathbf 1_{\{a_k>t\}}\right)^r\,dt
&=\sum_{j=1}^n j^r(a_j^r-a_{j+1}^r)\\
&=\sum_{j=1}^n (j^r-(j-1)^r)a_j^r.
\end{align*}
By Lemma~\ref{LemR}, applied with $p=r$, the last expression is bounded above by $(a_1+\cdots+a_n)^r$.
\end{proof}

\begin{theorem}\label{thH2}
Let $r>1$ and $-1<a<0$. If $x\in\ell^{r,\downarrow}(n^a)$, then
\[
\mathcal H_{r,a}\|Cx\|_{\ell^r(n^a)}
\ge
\|x\|_{\ell^r(n^a)}.
\]
The constant $\mathcal H_{r,a}$ is optimal.
\end{theorem}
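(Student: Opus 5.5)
The plan is to reduce the inequality to the block sequences $u^K$ by a layer-cake decomposition, using Lemma~\ref{layer-average-lemma} to control the Ces\`aro averages. First I would record that the definition \eqref{def-hra} is exactly the supremum of the ratio in the theorem over blocks. Indeed,
\[
\|u^K\|_{\ell^r(n^a)}^r=\sum_{n=1}^K n^a .
\]
Moreover $(Cu^K)_n=1$ for $n\le K$ and $(Cu^K)_n=K/n$ for $n>K$, so
\[
\|Cu^K\|_{\ell^r(n^a)}^r=\sum_{n=1}^K n^a+K^r\sum_{n=K+1}^\infty n^{a-r}.
\]
The series converges because $r-a>1$. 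Hence
\[
\mathcal H_{r,a}^r=\sup_{K\ge1}\frac{\|u^K\|_{\ell^r(n^a)}^r}{\|Cu^K\|_{\ell^r(n^a)}^r}.
\]
This immediately gives optimality: any constant valid for all of $\ell^{r,\downarrow}(n^a)$ must be at least this supremum, since each $u^K$ belongs to the cone.

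For the inequality, I take a nonnegative nonincreasing $x$ and, for $t>0$, let $K(t)=\#\{n: x_n>t\}$. Since $x$ is nonincreasing and (being in $\ell^r(n^a)$ with $a>-1$) tends to $0$, the level set $\{n:x_n>t\}$ is exactly $\{1,\dots,K(t)\}$. Thus $\mathbf 1_{\{x_n>t\}}=u^{K(t)}_n$, with the convention $u^0=0$ when $t\ge x_1$. The left-hand side is then linear in the indicators. Using $x_n^r=r\int_0^\infty t^{r-1}\mathbf 1_{\{x_n>t\}}\,dt$ and Tonelli,
\[
\|x\|_{\ell^r(n^a)}^r=r\int_0^\infty t^{r-1}\|u^{K(t)}\|_{\ell^r(n^a)}^r\,dt .
\]

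For the Ces\`aro side I apply the second statement of Lemma~\ref{layer-average-lemma} for each fixed $n$, with $a_k=x_k$, $k\le n$. This gives
\[
(Cx)_n^r\ge r\int_0^\infty t^{r-1}\Bigl(\frac1n\sum_{k=1}^n\mathbf 1_{\{x_k>t\}}\Bigr)^r dt
= r\int_0^\infty t^{r-1}\bigl((Cu^{K(t)})_n\bigr)^r dt .
\]
Multiplying by $n^a$, summing over $n$ and applying Tonelli again yields
\[
\|Cx\|_{\ell^r(n^a)}^r\ge r\int_0^\infty t^{r-1}\|Cu^{K(t)}\|_{\ell^r(n^a)}^r\,dt .
\]
By the block identity above, $\|Cu^{K}\|^r\ge \mathcal H_{r,a}^{-r}\|u^K\|^r$ for every $K\ge0$, trivially for $K=0$. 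Inserting this under the integral gives $\|Cx\|^r\ge\mathcal H_{r,a}^{-r}\|x\|^r$, which is the claimed inequality.

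The only delicate step is the pointwise Ces\`aro estimate. The map $x\mapsto (Cx)_n^r$ is not linear in the level sets, so a superadditivity (convexity) input is needed. Lemma~\ref{layer-average-lemma}, which rests on Lemma~\ref{LemR}, supplies exactly this, in the right direction. I would also check that no integrability issue arises: if $\|Cx\|_{\ell^r(n^a)}=\infty$ there is nothing to prove, and all integrands are nonnegative, so Tonelli applies throughout. It also remains to confirm $0<\mathcal H_{r,a}<1$, so that the constant is meaningful. This is immediate from the block formula: every ratio is strictly less than $1$, and the limit as $K\to\infty$ is $(r-a-1)/r<1$.
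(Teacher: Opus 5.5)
Your proposal is correct and follows essentially the same route as the paper: a layer-cake decomposition over the level sets $\{1,\dots,N_t\}$ reduces the problem to the blocks $u^{K}$. The definition of $\mathcal H_{r,a}$ then controls each block, and Lemma~\ref{layer-average-lemma}, applied for each fixed $n$, reassembles $\|Cx\|_{\ell^r(n^a)}$; optimality is read off from the blocks themselves. You also justify two points the paper leaves implicit: the level sets are finite, since $\sum n^a=\infty$ forces $x_n\to0$, and $\mathcal H_{r,a}<1$.
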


\begin{proof}
We follow the layer-cake argument used in \cite{BPS}. Let $x\in\ell^{r,\downarrow}(n^a)$ be nonnegative. For $t>0$, put
\[
D_t=\{n\in\N:\ x_n>t\}.
\]
Since $x$ is nonincreasing, either $D_t=\varnothing$ or $D_t=\{1,2,\ldots,N_t\}$ for some integer $N_t\ge1$.

By Fubini's theorem,
\begin{align*}
\|x\|_{\ell^r(n^a)}^r
&=\sum_{n=1}^{\infty}x_n^r n^a
=r\sum_{n=1}^{\infty}n^a\int_0^{x_n}t^{r-1}\,dt\\
&=r\int_0^{\infty}t^{r-1}\sum_{n\in D_t}n^a\,dt
=r\int_0^{\infty}t^{r-1}\|u^{N_t}\|_{\ell^r(n^a)}^r\,dt.
\end{align*}
By the definition of $\mathcal H_{r,a}$,
\[
\|u^K\|_{\ell^r(n^a)}
\le
\mathcal H_{r,a}\|Cu^K\|_{\ell^r(n^a)},
\qquad K\ge1.
\]
Hence
\begin{align*}
\|x\|_{\ell^r(n^a)}^r
&\le
\mathcal H_{r,a}^r r\int_0^{\infty}t^{r-1}\|Cu^{N_t}\|_{\ell^r(n^a)}^r\,dt\\
&=\mathcal H_{r,a}^r r\int_0^{\infty}t^{r-1}
\sum_{n=1}^{\infty}\left(\frac1n\sum_{k=1}^n u_k^{N_t}\right)^r n^a\,dt\\
&\le\mathcal H_{r,a}^r
\sum_{n=1}^{\infty}n^a
\left(\frac1n\sum_{k=1}^n x_k\right)^r.
\end{align*}
In the last step we used Lemma~\ref{layer-average-lemma} for each fixed $n$. Thus
\[
\|x\|_{\ell^r(n^a)}^r
\le
\mathcal H_{r,a}^r\|Cx\|_{\ell^r(n^a)}^r.
\]

The constant is optimal since
\[
\sup_{K\ge1}\frac{\|u^K\|_{\ell^r(n^a)}}{\|Cu^K\|_{\ell^r(n^a)}}=\mathcal H_{r,a},
\]
 by \eqref{def-hra}.
\end{proof}

\begin{remark}\label{remark-h2-constant}
The limiting value
\[
\lim_{K\to\infty}
\frac{\displaystyle\sum_{n=1}^K n^a}
{\displaystyle\sum_{n=1}^K n^a+K^r\sum_{n=K+1}^{\infty}n^{a-r}}
=\frac{r-a-1}{r}
\]
is only an asymptotic  bound for $\mathcal H_{r,a}^r$. It is not, in general, the supremum in the discrete problem. The one-atom value gives
\[
\frac{\|u^1\|_{\ell^r(n^a)}^r}{\|Cu^1\|_{\ell^r(n^a)}^r}
=\frac1{\zeta(r-a)}.
\]
Thus the discrete constant may be larger than the  constant in the continuous case. This is the phenomenon that must be taken into account in the Lorentz sequence estimates below.
\end{remark}

\begin{corollary}\label{cormain}
Let $1<s<p<\infty$ and set
\begin{equation}\label{def-hps}
\mathcal C_{p,s}:=\mathcal H_{s,\,s/p-1}.
\end{equation}
Then, for every $x\in\ell^{p,s}$,
\begin{equation}\label{cormain-discrete}
\mathcal C_{p,s}^{-1}\|x\|_{p,s}
\le
\|x\|^*_{p,s}.
\end{equation}
The constant is optimal.
\end{corollary}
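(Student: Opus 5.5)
The plan is to apply Theorem~\ref{thH2} with $r=s$ and $a=\frac{s}{p}-1$, exactly as Corollary~\ref{Cor36} was obtained from Theorem~\ref{thH1}. First I check the hypotheses. Since $1<s<p$, we have $r=s>1$ and $-1<a=\frac sp-1<0$, so Theorem~\ref{thH2} applies. By rearrangement invariance we may take $x$ nonnegative and nonincreasing, so that $x=x^*$. Then
\[
\|x\|_{p,s}^s=\sum_{n=1}^\infty n^{\frac sp-1}x_n^s=\|x\|_{\ell^s(n^a)}^s,
\]
and likewise
\[
\|x\|_{p,s}^{*\,s}=\sum_{n=1}^\infty n^{\frac sp-1}\Bigl(\frac{X_n}{n}\Bigr)^s=\|Cx\|_{\ell^s(n^a)}^s.
\]
In particular, $x\in\ell^{p,s}$ means exactly that $x\in\ell^{s,\downarrow}(n^a)$.

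Theorem~\ref{thH2} then gives $\|x\|_{\ell^s(n^a)}\le\mathcal H_{s,a}\|Cx\|_{\ell^s(n^a)}$. By \eqref{def-hps} this reads $\|x\|_{p,s}\le\mathcal C_{p,s}\|x\|^*_{p,s}$. Dividing by $\mathcal C_{p,s}$, which is positive since $0<\mathcal H_{r,a}<1$, yields \eqref{cormain-discrete}.

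For optimality, I use that the identification above is an equality of functionals on the cone of nonincreasing sequences. So the best constant in \eqref{cormain-discrete} equals the best constant in Theorem~\ref{thH2}. That constant is attained as a supremum over the block sequences $u^K$ defined in \eqref{unK}, which lie in $\ell^{p,s}$. Hence
\[
\sup_K \frac{\|u^K\|_{p,s}}{\|u^K\|^*_{p,s}}=\mathcal C_{p,s},
\]
and no smaller constant than $\mathcal C_{p,s}$ can work in $\|x\|_{p,s}\le \mathcal C_{p,s}\|x\|^*_{p,s}$. There is no real obstacle here; the only point requiring care is the exponent bookkeeping, namely that $s/p-1$ lies in $(-1,0)$ and that $\frac{s}{p}-1-s=-\frac{s}{p'}-1$, which matches the second expression for the maximal norm.
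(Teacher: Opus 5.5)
Your proposal is correct and is essentially the paper's proof: apply Theorem~\ref{thH2} with $r=s$ and $a=s/p-1\in(-1,0)$, identify $\|x\|_{p,s}$ and $\|x\|^*_{p,s}$ with $\|x\|_{\ell^s(n^a)}$ and $\|Cx\|_{\ell^s(n^a)}$ on the cone of nonincreasing sequences, and inherit optimality from the block sequences $u^K$ that define $\mathcal H_{s,a}$. You spell out the norm identification and the reduction to $x=x^*$ more explicitly than the paper does.
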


\begin{proof}
The inequality follows from Theorem~\ref{thH2} with $r=s$ and weight exponent
\[
a=\frac{s}{p}-1\in(-1,0).
\]
The optimality of the constant follows from the optimality of $\mathcal H_{s,s/p-1}$ in Theorem~\ref{thH2}.
\end{proof}

\begin{remark}[The converse estimate]\label{rem-upper-hardy}
Inequalities \eqref{cor36} and \eqref{cormain-discrete} compare the standard and the maximal Lorentz norms for non-negative sequences. An estimate of the form
\[
\|x\|^*_{p,s}\le A_{p,s}\|x\|_{p,s}
\]
holds, for all $1<p,s<\infty$, with some finite constant $A_{p,s}$. Indeed, on the cone of non-negative nonincreasing sequences this is a weighted discrete Hardy inequality with the weight $b_n=n^{s/p-1}$, and this weight satisfies the discrete condition of Ari\~no--Muckenhoupt type which characterizes such inequalities; see \cite[Theorem~1]{BGE2006}. For $p=s$ the estimate reduces to the classical discrete Hardy inequality and the best constant is $A_{p,p}=p'$; see \cite{HLP}. For $p\neq s$ the test sequences $x_n=n^{-1/p-\varepsilon}$, $\varepsilon\to 0^{+}$, show that no constant smaller than $p'$ is admissible, but the value of the best constant seems to be unknown; compare \cite[Remark~2]{BGE2006} and \cite[Corollary, p.~270]{HK}. Since this estimate is not used in the proofs of the results of this paper, we do not pursue this question here.
\end{remark}

\section{Equivalence between the dual and maximal norm}

We start by computing the Lorentz norms of the block sequences $u^K$.

\begin{lemma}\label{lema41}
Let $u^K=(u_n^K)_n$ be defined by \eqref{unK}. If $1<p,s<\infty$, then
\[
{\|u^K\|^*}_{p,s}^s
=
\sum_{n=1}^K n^{\frac{s}{p}-1}
+K^s\sum_{n=K+1}^{\infty}n^{-\frac{s}{p'}-1}.
\]
Moreover, if $1<p\le s<\infty$, then
\[
{\|u^K\|'}_{p,s}^s
=K^s\left(\sum_{n=1}^K n^{-\alpha}\right)^{-s/s'},
\qquad
\alpha=1-\frac{s'}{p'}.
\]
If $1<p<s<\infty$, we have  
\[
{\|(u^K)^\circ\|}_{p,s}^s
=K^s\left(\sum_{n=1}^K n^{-\alpha}\right)^{-s/s'},
\]
where $(u^K)^\circ$ is the level sequence of $u^K$ with respect to
${\varphi}_n=n^{-\alpha}$.
\end{lemma}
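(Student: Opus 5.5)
The proof splits into three computations, taken in the order of the statement.

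\emph{Maximal norm.} For $u^K$ the partial sums are $X_n=\min(n,K)$. Substituting this into $\|x\|_{p,s}^{*\,s}=\sum_n n^{-s/p'-1}X_n^s$ gives two ranges. For $n\le K$ the terms are $n^{s-s/p'-1}=n^{s/p-1}$. For $n>K$ the terms are $K^s n^{-s/p'-1}$. This is exactly the claimed formula.

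\emph{Level sequence.} Put $\Phi_n=\sum_{i\le n} i^{-\alpha}$. First check that \eqref{limita} holds: since $p<s$ we have $s'<p'$, so $\alpha<1$ and $\Phi_n\to\infty$. Next, $u^K/\varphi$ equals $n^{\alpha}$ for $n\le K$. This is nondecreasing when $\alpha\ge0$, and decreasing when $\alpha<0$, but then the jump to $0$ after $K$ still has to be levelled. The candidate is
\[
(u^K)^\circ_n=\lambda n^{-\alpha}\ (n\le K),\qquad (u^K)^\circ_n=0\ (n>K),\qquad \lambda=K/\Phi_K.
\]
I will verify the conditions of Theorem~\ref{level functions}:
\begin{itemize}
\item[(a)] The ratio $(u^K)^\circ_n/\varphi_n$ equals $\lambda$ on $\{1,\dots,K\}$ and $0$ afterwards, so it is (weakly) decreasing.
\item[(c)] On the single interval $I=\{1,\dots,K\}$ the sums of $u^K$ and $(u^K)^\circ$ agree, both being $K$.
\item[(b)] The inequality $\sum_{i\le n}u_i^K\le\sum_{i\le n}(u^K)_i^\circ$ reduces, for $n\le K$, to $n/K\le \Phi_n/\Phi_K$. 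This means the averages $\Phi_n/n$ of $i^{-\alpha}$ are nonincreasing in $n$.
\end{itemize}

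\emph{Main obstacle.} Condition (b) is the step I expect to need care, and it is where the sign of $\alpha$ matters. If $\alpha\ge0$, the terms $i^{-\alpha}$ are nonincreasing, so their running averages are nonincreasing and (b) holds. If $\alpha<0$, the averages increase, the inequality $n/K\le\Phi_n/\Phi_K$ fails, and the level sequence would have a different shape. Here $\alpha\ge0$ means $s'\le p'$, i.e.\ $p\le s$, and this is precisely the hypothesis; so the condition is met and uniqueness in Theorem~\ref{level functions} identifies $(u^K)^\circ$ as the candidate above.

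\emph{Lorentz norm of the level sequence.} For $\alpha>0$ the candidate is decreasing, so it is its own rearrangement. Then
\[
\|(u^K)^\circ\|_{p,s}^s=\lambda^s\sum_{n\le K} n^{s/p-1-s\alpha}.
\]
Using $1-\alpha=s'/p'$ and $1/s+1/s'=1$, the exponent simplifies: $s/p-1-s\alpha=s/p-1-s+ss'/p'=-\alpha$. Hence the sum is $\lambda^s\Phi_K=K^s\Phi_K^{1-s}=K^s\Phi_K^{-s/s'}$, as claimed. In the case $\alpha=0$, i.e.\ $p=s$, the same formula gives the plain $\ell^p$-type value; no levelling occurs and the computation is direct.

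\emph{Dual norm.} For $p<s$ the dual-norm identity follows from Theorem~\ref{t3}. For $p=s$ I argue directly. With $\alpha=0$, duality gives $\ell^{p,p}=\ell^p$ with dual $\ell^{p'}$, and $\|u^K\|'=K^{1/p}$, which matches $K\cdot K^{-1/s'}=K^{1/s}$.
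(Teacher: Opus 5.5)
Your proposal is correct and follows the paper's proof: partial sums $\min\{n,K\}$ for the maximal norm, the explicit level sequence $K n^{-\alpha}/\Phi_K$ on $\{1,\dots,K\}$ with the exponent identity $s/p-1-\alpha s=-\alpha$, and Theorem~\ref{t3} for $p<s$ versus ordinary $\ell^p$-duality for $p=s$; your verification of conditions (a)--(c), which the paper skips, is a good addition. One small slip: $\alpha<1$, and hence $\Phi_n\to\infty$, holds simply because $s'/p'>0$, whereas $s'<p'$ is what gives $\alpha>0$.
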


\begin{proof}
The formula for the maximal norm follows from
\[
\sum_{k=1}^n u_k^K=\min\{n,K\}.
\]
Thus
\[
{\|u^K\|^*}_{p,s}^s
=
\sum_{n=1}^K n^{-\frac{s}{p'}-1}n^s
+\sum_{n=K+1}^{\infty}n^{-\frac{s}{p'}-1}K^s,
\]
which is the first formula.

For the second formula, the level sequence of $u^K$ with respect to $\varphi(n)=n^{-\alpha}$ is supported on $\{1,\ldots,K\}$ and is given by
\[
(u^K)_n^\circ
=
\frac{K n^{-\alpha}}{\sum_{j=1}^K j^{-\alpha}},
\qquad 1\le n\le K.
\]
Therefore
\begin{align*}
{\|(u^K)^\circ\|}_{p,s}^s
&=
\sum_{n=1}^K n^{\frac{s}{p}-1}
\left(\frac{K n^{-\alpha}}{\sum_{j=1}^K j^{-\alpha}}\right)^s\\
&=K^s\left(\sum_{j=1}^K j^{-\alpha}\right)^{-s}
\sum_{n=1}^K n^{\frac{s}{p}-1-\alpha s}.
\end{align*}
Since
\[
\frac{s}{p}-1-\alpha s=-\alpha,
\]
we obtain
\[
{\|(u^K)^\circ\|}_{p,s}^s
=K^s\left(\sum_{n=1}^K n^{-\alpha}\right)^{1-s}
=K^s\left(\sum_{n=1}^K n^{-\alpha}\right)^{-s/s'}.
\]
If $1<p<s<\infty$, the identity with the dual norm follows from
Theorem~\ref{t3}. If $p=s$, then $\ell^{p,p}=\ell^p$ isometrically, and the same
formula follows from the usual $\ell^p$-duality.
\end{proof}

\noindent In the next theorem, we prove inequalities  between the dual and the maximal norm. The formulation with the level sequence is not merely notational. The dual norm is a supremum over all admissible test sequences, whereas the maximal norm is expressed through Ces\`aro averages. The norm of the level sequence is equal with the dual norm and therefore allows the supremum to be replaced by a concrete Lorentz norm. Thus, in the range $1<p\le s<\infty$, the estimate is written explicitly in terms of the level-sequence.

\begin{theorem}\label{thBps}
Let $1<p\le s<\infty$. Then, for every $x\in\ell^{p,s}$,
\begin{equation}\label{dual-max-discrete}
\|x\|'_{p,s}
\le
\zeta\!\left(\frac{s}{p'}+1\right)^{-1/s}
\|x\|^*_{p,s}.
\end{equation}
The constant is optimal.

If $1<p<s<\infty$ and $x^\circ$ denotes the level sequence of $x$ with respect to
\[
{\varphi}_{n}=n^{-\alpha},\qquad \alpha=1-\frac{s'}{p'},
\]
then the same estimate can be written in the level-sequence form
\begin{equation}\label{level-dual-max-estimate}
\|x^\circ\|_{p,s}
\le
\zeta\!\left(\frac{s}{p'}+1\right)^{-1/s}
\|x\|^*_{p,s}.
\end{equation}
\end{theorem}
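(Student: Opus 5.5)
The plan is to factor the estimate through the standard Lorentz quasinorm:
\[
\|x\|'_{p,s}\le \|x\|_{p,s}\le \zeta\!\left(\tfrac{s}{p'}+1\right)^{-1/s}\|x\|^*_{p,s}.
\]
The second inequality is exactly Corollary~\ref{Cor36}, which applies because $1<p\le s<\infty$. So the only new ingredient is the first inequality, which is a H\"older-type estimate for the pairing between $\ell^{p,s}$ and $\ell^{p',s'}$.

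To prove the first inequality, take $y\in\ell^{p',s'}$ with $\|y\|_{p',s'}\le 1$. By the Hardy--Littlewood rearrangement inequality,
\[
\Bigl|\sum_n x_ny_n\Bigr|\le \sum_n x_n^*y_n^*.
\]
Next I split each term as
\[
x_n^*y_n^*=\bigl(n^{\frac1p-\frac1s}x_n^*\bigr)\bigl(n^{\frac1{p'}-\frac1{s'}}y_n^*\bigr).
\]
This splitting is legitimate because $\frac1p+\frac1{p'}-\frac1s-\frac1{s'}=0$, so the two powers of $n$ cancel. Applying H\"older's inequality with exponents $s$ and $s'$ gives
\[
\sum_n x_n^*y_n^*\le\Bigl(\sum_n n^{\frac sp-1}(x_n^*)^s\Bigr)^{1/s}\Bigl(\sum_n n^{\frac{s'}{p'}-1}(y_n^*)^{s'}\Bigr)^{1/s'}=\|x\|_{p,s}\|y\|_{p',s'}.
\]
Taking the supremum over all such $y$ yields $\|x\|'_{p,s}\le\|x\|_{p,s}$. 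Combining this with Corollary~\ref{Cor36} proves \eqref{dual-max-discrete}.

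For the level-sequence form \eqref{level-dual-max-estimate} with $p<s$, I will invoke Theorem~\ref{t3}. It identifies $\|x\|'_{p,s}$ with $\|x^\circ\|_{p,s}$ for nonnegative nonincreasing $x$, and by rearrangement invariance we may restrict to such $x$.

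Optimality will be checked on the one-atom sequence $u^1$. Lemma~\ref{lema41} with $K=1$ gives $\|u^1\|'_{p,s}=1$, since $\sum_{n=1}^1 n^{-\alpha}=1$. The same lemma gives $\|u^1\|^{*\,s}_{p,s}=\sum_{n\ge1}n^{-\frac{s}{p'}-1}=\zeta(\frac{s}{p'}+1)$. Hence equality holds in \eqref{dual-max-discrete} for $u^1$, so no smaller constant is possible.

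I expect no serious obstacle in this argument. The only point that needs care is the exponent bookkeeping in the H\"older step, namely that the weights $n^{s/p-1}$ and $n^{s'/p'-1}$ arise from the chosen splitting.
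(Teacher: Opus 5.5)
Your proposal is correct and follows the paper's proof essentially step for step. The steps are: the rearrangement-plus-weighted-H\"older bound $\|x\|'_{p,s}\le\|x\|_{p,s}$, then Corollary~\ref{Cor36}, then Theorem~\ref{t3} for the level-sequence form, and the one-atom sequence $u^1$ for sharpness. Your splitting $n^{1/p-1/s}\cdot n^{1/p'-1/s'}$ also handles $p=s$ directly, so you do not need the paper's separate remark for that case.
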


\begin{proof}
By rearrangement invariance, we may assume that $x$ is nonnegative and nonincreasing. If $1<p<s<\infty$, Theorem~\ref{t3} gives
\[
\|x\|'_{p,s}=\|x^\circ\|_{p,s},
\]
where $x^\circ$ is the level sequence associated with $x$. Thus the level-sequence representation reduces the estimate for the dual norm to an estimate for the Lorentz norm of $x^\circ$.

For every $y\in\ell^{p',s'}$, the rearrangement inequality and H\"older's inequality with the conjugate weights give
\begin{align*}
|\sum_{n=1}^{\infty}x_n y_n|\le\sum_{n=1}^{\infty}x_n^* y_n^*
&\le
\left(\sum_{n=1}^{\infty}n^{\frac{s}{p}-1}(x_n^*)^s\right)^{1/s}
\left(\sum_{n=1}^{\infty}n^{\frac{s'}{p'}-1}(y_n^*)^{s'}\right)^{1/s'} \\
&=\|x\|_{p,s}\|y\|_{p',s'}.
\end{align*}
Taking the supremum over all $y$ satisfying $\|y\|_{p',s'}\le1$ yields
\[
\|x\|'_{p,s}\le \|x\|_{p,s}.
\]
If $p=s$, the same conclusion follows directly from the usual weighted H\"older inequality.

By Corollary~\ref{Cor36},
\[
\|x\|_{p,s}
\le
\zeta\!\left(\frac{s}{p'}+1\right)^{-1/s}\|x\|^*_{p,s}.
\]
Combining the last two inequalities proves \eqref{dual-max-discrete}. In the case $1<p<s<\infty$, Theorem~\ref{t3} also gives the equivalent level-sequence formulation \eqref{level-dual-max-estimate}.

To prove sharpness, take $x=u^1=(1,0,0,\ldots)$. Then
\[
\|u^1\|'_{p,s}=1,
\]
whereas
\[
\|u^1\|_{p,s}^{*\,s}
=
\sum_{n=1}^{\infty}n^{-\frac{s}{p'}-1}
=
\zeta\!\left(\frac{s}{p'}+1\right).
\]
Thus, equality holds in \eqref{dual-max-discrete} for $u^1$, and the constant is optimal.
\end{proof}

\begin{remark}[Comparison with the non-atomic constant]\label{continuous-constant-remark}
For $1<p<s<\infty$, the continuous analogue (see \cite{KolyadaSoria2010}) has the constant
\[
B_{p,s}^{\mathrm{cont}}
=
(p')^{-1/s}
\left(\frac{s'}{p'}\right)^{1/s'}
\left(\frac{s}{p}\right)^{1/s}.
\]
This number is recovered in the discrete setting as an asymptotic constant. Indeed, by Lemma~\ref{lema41},
\[
{\|u^K\|'}_{p,s}^{s}
=K^s\left(\sum_{n=1}^K n^{-\alpha}\right)^{-s/s'},
\]
\[
{\|u^K\|^*}_{p,s}^{s}
=\sum_{n=1}^K n^{\frac{s}{p}-1}
+K^s\sum_{n=K+1}^{\infty}n^{-\frac{s}{p'}-1},
\]
and, asymptotically, as $K\to\infty$,
\[
\sum_{n=1}^K n^{-\alpha}\sim\frac{p'}{s'}\,K^{s'/p'},
\qquad
\sum_{n=1}^K n^{\frac{s}{p}-1}\sim\frac{p}{s}\,K^{s/p},
\]
and
\[
K^s\sum_{n=K+1}^{\infty}n^{-\frac{s}{p'}-1}\sim\frac{p'}{s}\,K^{s/p}.
\]
Since $\frac{s'}{p'}\cdot\frac{s}{s'}=\frac{s}{p'}$ and $s-\frac{s}{p'}=\frac{s}{p}$, we obtain
\[
\lim_{K\to\infty}
\frac{{\|u^K\|'}_{p,s}^{s}}{{\|u^K\|^*}_{p,s}^{s}}
=
\left(\frac{s'}{p'}\right)^{s/s'}\frac{s}{p+p'}
=
\left(\frac{s'}{p'}\right)^{s/s'}\frac{s}{pp'}
=\bigl(B_{p,s}^{\mathrm{cont}}\bigr)^{s}.
\]
 Thus
\[
\lim_{K\to\infty}
\frac{\|u^K\|'_{p,s}}{\|u^K\|^*_{p,s}}
=
B_{p,s}^{\mathrm{cont}}.
\]
However, this is not the best constant in the discrete case. The one-atom sequence gives
\[
\frac{\|u^1\|'_{p,s}}{\|u^1\|^*_{p,s}}
=
\zeta\!\left(\frac{s}{p'}+1\right)^{-1/s},
\]
and Theorem~\ref{thBps} shows that this  value is the optimal  constant in the discrete case.
\end{remark}

For completeness, we also mention the corresponding estimate in the range $1<s\le p<\infty$.

\begin{lemma}\label{norm-range-dual}
Let $1<s\le p<\infty$. Then
\[
\|x\|'_{p,s}=\|x\|_{p,s},
\qquad x\in\ell^{p,s}.
\]
\end{lemma}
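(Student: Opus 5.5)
Both inequalities between $\|x\|'_{p,s}$ and $\|x\|_{p,s}$ will come from the fact that for $1<s\le p$ the functional $\|\cdot\|_{p,s}$ is a norm and its weight $w_n=n^{s/p-1}$ is nonincreasing. By rearrangement invariance we may assume $x$ is nonnegative and nonincreasing.

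\emph{Upper bound.} The argument in the proof of Theorem~\ref{thBps} uses nothing about the order of $p$ and $s$. For any $y$ with $\|y\|_{p',s'}\le1$, the rearrangement inequality gives $|\sum x_ny_n|\le\sum x_n^*y_n^*$. Write $n^{1/p-1/s}\cdot n^{1/p'-1/s'}=1$ and apply Hölder's inequality with exponents $s,s'$. This yields $|\sum x_ny_n|\le\|x\|_{p,s}\|y\|_{p',s'}$, and hence $\|x\|'_{p,s}\le\|x\|_{p,s}$.

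\emph{Lower bound.} We choose the extremal $y$ for which Hölder's inequality is an equality:
\[
y_n=n^{\frac{s}{p}-1}x_n^{s-1}\,\|x\|_{p,s}^{1-s}.
\]
First assume $x$ has finite support, so that all sums are finite. Then $\sum x_ny_n=\|x\|_{p,s}$. The key point is that $y$ is itself nonnegative and nonincreasing: $x_n^{s-1}$ is nonincreasing since $s>1$, and $n^{s/p-1}$ is nonincreasing since $s\le p$. So $y^*=y$, and we can compute $\|y\|_{p',s'}$ directly. Using $(s-1)s'=s$ and $\frac{s'}{p'}-1+s'\bigl(\frac sp-1\bigr)=\frac sp-1$, we get
\[
\|y\|_{p',s'}^{s'}=\|x\|_{p,s}^{-s}\sum_n n^{\frac{s'}{p'}-1}n^{s'(\frac sp-1)}x_n^{s}=\|x\|_{p,s}^{-s}\sum_n n^{\frac sp-1}x_n^s=1.
\]
Hence $\|x\|'_{p,s}\ge\|x\|_{p,s}$ for finitely supported $x$.

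\emph{Extension to general $x$.} Apply the finite-support case to the truncations $x\mathbf 1_{\{1,\dots,N\}}$. Both norms are monotone in this setting: $\|\cdot\|'_{p,s}$ is defined as a supremum over test sequences, and enlarging $|x|$ only enlarges it. Since $\|x\mathbf 1_{\{1,\dots,N\}}\|_{p,s}\to\|x\|_{p,s}$ by monotone convergence, letting $N\to\infty$ gives the lower bound in general.

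The only delicate step is checking that the extremal $y$ is nonincreasing. This is exactly where the hypothesis $s\le p$ is used; when $p<s$ this monotonicity fails and the level sequence becomes necessary. The exponent bookkeeping above is routine.
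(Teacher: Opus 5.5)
Your proof is correct and takes the same route as the paper. The upper bound comes from the rearrangement inequality plus weighted H\"older. The lower bound uses the same extremal sequence $y_n=\|x\|_{p,s}^{1-s}n^{s/p-1}x_n^{s-1}$, which is nonincreasing precisely because $s\le p$, first for finitely supported nonincreasing $x$ and then extended by truncation. You also write out the exponent check and the monotonicity/truncation step that the paper leaves implicit.
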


\begin{proof}
The inequality $\|x\|'_{p,s}\le \|x\|_{p,s}$ follows from the weighted H\"older inequality used in the proof of Theorem~\ref{thBps}. For the reverse inequality, assume first that $x=x^*$ is nonnegative and finitely supported. Set
\[
y_n=\|x\|_{p,s}^{1-s}\, n^{\frac{s}{p}-1}x_n^{s-1}.
\]
Then $\|y\|_{p',s'}=1$. Since $s\le p$, the sequence $n^{s/p-1}$ is nonincreasing, and therefore $y$ is nonincreasing. A direct computation gives equality in H\"older's inequality. The general case follows by truncation and rearrangement invariance.
\end{proof}

\begin{proposition}\label{prop-dual-max-sp}
Let $1<s<p<\infty$ and let $\mathcal C_{p,s}$ be defined by \eqref{def-hps}. Then
\[
\|x\|'_{p,s}\le \mathcal C_{p,s}\|x\|^*_{p,s}.
\]
The constant is optimal.
\end{proposition}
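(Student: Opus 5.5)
Combine Lemma~\ref{norm-range-dual} with Corollary~\ref{cormain}, then check sharpness on the blocks $u^K$.

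By rearrangement invariance we take $x$ nonnegative and nonincreasing. Since $1<s<p<\infty$, Lemma~\ref{norm-range-dual} gives $\|x\|'_{p,s}=\|x\|_{p,s}$. Corollary~\ref{cormain}, which is Theorem~\ref{thH2} with $r=s$ and $a=s/p-1\in(-1,0)$, gives $\|x\|_{p,s}\le \mathcal C_{p,s}\|x\|^*_{p,s}$. Chaining these yields the inequality. Unlike Theorem~\ref{thBps}, no level sequence is needed here, because the dual norm coincides with the standard norm in this range.

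For optimality, note that the dual norm equals the standard norm for every $x$. Hence the best constant in $\|x\|'_{p,s}\le C\|x\|^*_{p,s}$ equals the best constant in $\|x\|_{p,s}\le C\|x\|^*_{p,s}$, and Corollary~\ref{cormain} says this is exactly $\mathcal C_{p,s}$.

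To make this concrete, test on $u^K$. Lemma~\ref{norm-range-dual} and the definition of the Lorentz norm give $\|u^K\|'^{\,s}_{p,s}=\|u^K\|^s_{p,s}=\sum_{n=1}^K n^{s/p-1}$. Lemma~\ref{lema41} gives
\[
{\|u^K\|^*}_{p,s}^s=\sum_{n=1}^K n^{\frac sp-1}+K^s\sum_{n=K+1}^\infty n^{-\frac s{p'}-1}.
\]
Since $-\frac{s}{p'}-1=a-s$ with $a=s/p-1$, the ratio $\|u^K\|'^{\,s}_{p,s}/{\|u^K\|^*}_{p,s}^s$ is exactly the $K$-th quotient in \eqref{def-hra} with $r=s$. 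Taking the supremum over $K$ gives $\mathcal H_{s,s/p-1}^s=\mathcal C_{p,s}^s$.

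The only step needing care is the identification of the exponents $-\frac{s}{p'}-1=a-s$ in \eqref{def-hra}. Everything else is a direct combination of earlier results.
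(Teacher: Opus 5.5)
Your proposal is correct and follows the paper's proof: Lemma~\ref{norm-range-dual} identifies the dual norm with the standard norm when $s<p$, Corollary~\ref{cormain} supplies the inequality, and optimality transfers directly from that corollary. Your explicit check on the blocks $u^K$, including the exponent identity $-\frac{s}{p'}-1=a-s$, correctly unpacks the optimality already contained in Theorem~\ref{thH2}.
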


\begin{proof}
By Lemma~\ref{norm-range-dual}, $\|x\|'_{p,s}=\|x\|_{p,s}$. Corollary~\ref{cormain} gives
\[
\|x\|_{p,s}\le \mathcal C_{p,s}\|x\|_{p,s}^{*}.
\]
The optimality follows from the optimality of the constant in Corollary~\ref{cormain}.
\end{proof}

\section{H\"older-type inequalities}

We now formulate  the H\"older-type inequality in terms of maximal Lorentz norms. 

Recall that for $1<s<p<\infty$ the constant $\mathcal C_{p,s}=\mathcal H_{s,\,s/p-1}$
is defined in \eqref{def-hps}. For $1<p\le s<\infty$ we define
$\mathcal C_{p,s}:=\zeta\!\left(\frac{s}{p'}+1\right)^{-1/s}$, so that, by
Corollaries~\ref{Cor36} and~\ref{cormain},
\begin{equation}\label{standard-max-embedding}
\|x\|_{p,s}\le \mathcal C_{p,s}\,\|x\|^{*}_{p,s},
\qquad 1<p,s<\infty.
\end{equation}

\begin{theorem}\label{Holder-explicit}
Let $1<p,s<\infty$. Then, for all $x\in\ell^{p,s}$ and $y\in\ell^{p',s'}$, we have
\begin{equation}\label{Holder-dps}
\left|\sum_{n=1}^{\infty}x_ny_n\right|
\le
\mathcal D_{p,s}
\|x\|^*_{p,s}\|y\|^*_{p',s'},
\end{equation}
where $\mathcal D_{p,s}=\max\left\{\mathcal C_{p,s}\zeta{\left(\frac{s'}{p}+1\right)}^{-1/s'},\mathcal C_{p',s'}\zeta{\left(\frac{s}{p'}+1\right)}^{-1/s}\right\}$.
\end{theorem}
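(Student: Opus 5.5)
We reduce to nonnegative nonincreasing $x$ and $y$: by the rearrangement inequality $|\sum x_ny_n|\le\sum x_n^*y_n^*$, and both maximal norms depend only on $x^*$, $y^*$. We then prove two separate bounds, each of which puts the full weight of the comparison on one factor, and take the better (in fact either) of them. Since the maximum of the two bounds is at least each one, it suffices to show
\[
\sum x_n^*y_n^*\le \mathcal C_{p,s}\,\zeta\!\left(\tfrac{s'}{p}+1\right)^{-1/s'}\|x\|^*_{p,s}\|y\|^*_{p',s'}.
\]
The first bound alone would already give \eqref{Holder-dps}. We also record the symmetric bound, since it shows that $\mathcal D_{p,s}$ is symmetric under $(p,s)\leftrightarrow(p',s')$.

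For the first bound we begin with the weighted H\"older inequality from the proof of Theorem~\ref{thBps}:
\[
\sum x_n^*y_n^*\le \|x\|_{p,s}\|y\|_{p',s'}.
\]
Here the exponents pair correctly, because $n^{s/p-1}$ and $n^{s'/p'-1}$ are conjugate weights: $(s/p-1)/s+(s'/p'-1)/s'=1/p+1/p'-1/s-1/s'=0$. Next we apply \eqref{standard-max-embedding} to $x$, which gives $\|x\|_{p,s}\le \mathcal C_{p,s}\|x\|^*_{p,s}$ for all $1<p,s<\infty$. For $y$ we need $\|y\|_{p',s'}\le \zeta(s'/p+1)^{-1/s'}\|y\|^*_{p',s'}$. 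This is Corollary~\ref{Cor36} with the pair $(p',s')$ in place of $(p,s)$, whose constant is $\zeta(s'/(p')'+1)^{-1/s'}=\zeta(s'/p+1)^{-1/s'}$. Combining the three estimates gives the first term of the maximum. The second term follows by exchanging the roles of $x$ and $y$: we use \eqref{standard-max-embedding} for $(p',s')$ on $y$ and Corollary~\ref{Cor36} for $(p,s)$ on $x$.

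The main obstacle is that Corollary~\ref{Cor36} requires $p\le s$, so it applies to $y$ only when $p'\le s'$, that is, when $s\le p$. When $p<s$ we have $p'>s'$, and the only available bound for $y$ is $\mathcal C_{p',s'}=\mathcal H_{s',s'/p'-1}$ from Corollary~\ref{cormain}, not the zeta factor. I would resolve this by case analysis, showing that in each case at least one of the two terms in the maximum is a valid bound:

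\begin{itemize}
\item If $p\le s$, then $\mathcal C_{p,s}=\zeta(s/p'+1)^{-1/s}$ and $s'\le p'$. Applying \eqref{standard-max-embedding} to $y$ and Corollary~\ref{Cor36} to $x$ yields $\mathcal C_{p',s'}\zeta(s/p'+1)^{-1/s}$, which is the second term.
\item If $s\le p$, the first argument is valid as written and yields the first term.
\end{itemize}

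In both cases the quantity obtained is at most $\mathcal D_{p,s}$. This establishes \eqref{Holder-dps}.
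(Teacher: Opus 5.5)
Your final case analysis is correct and is essentially the paper's proof. The paper applies Theorem~\ref{thBps} to $x$ when $p\le s$ and to the pair $(p',s')$ when $s\le p$, and applies \eqref{standard-max-embedding} to the other factor. Theorem~\ref{thBps} is itself just the weighted H\"older inequality combined with Corollary~\ref{Cor36}, so you have simply inlined it.

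You should delete the opening claims that the first bound alone suffices and that the second term follows unconditionally by exchanging $x$ and $y$. As you later observe yourself, each bound is proved only in its own range ($s\le p$, respectively $p\le s$), and it is precisely the maximum in $\mathcal D_{p,s}$ that covers both cases.
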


\begin{proof}
Assume first that $1<p\le s<\infty$; in this case $s'\le p'$.
Applying the inequality of Theorem~\ref{thBps} and the definition of the dual norm, we get
\[
\left|\sum_{n=1}^{\infty}x_ny_n\right|
\le
\zeta{\left(\frac{s}{p'}+1\right)}^{-1/s}\|x\|^*_{p,s}\|y\|_{p',s'}.
\]
Using \eqref{standard-max-embedding} for the pair $(p',s')$ we obtain
\[
\left|\sum_{n=1}^{\infty}x_ny_n\right|
\le
\mathcal C_{p',s'}\zeta{\left(\frac{s}{p'}+1\right)}^{-1/s}\|x\|^*_{p,s}\|y\|^*_{p',s'}.
\]
If $1<s\le p<\infty$, then $p'\le s'$, and the same argument, with
Theorem~\ref{thBps} applied to the pair $(p',s')$ and
\eqref{standard-max-embedding} applied to the pair $(p,s)$, gives
\[
\left|\sum_{n=1}^{\infty}x_ny_n\right|
\le
\mathcal C_{p,s}\zeta{\left(\frac{s'}{p}+1\right)}^{-1/s'}\|x\|^*_{p,s}\|y\|^*_{p',s'}.
\]
Hence
\[
\left|\sum_{n=1}^{\infty}x_ny_n\right|
\]
\[
\le
\max\left\{\mathcal C_{p,s}\zeta{\left(\frac{s'}{p}+1\right)}^{-1/s'},
\mathcal C_{p',s'}\zeta{\left(\frac{s}{p'}+1\right)}^{-1/s}\right\}
\|x\|^*_{p,s}\|y\|^*_{p',s'},
\]
i.e.\ \eqref{Holder-dps}, which completes the proof. 
\end{proof}

\noindent The optimal constant in the H\"older's inequality in the discrete case associated with the maximal Lorentz norms i.e. 
\begin{equation}\label{Aopt-def}
\Aopt_{p,s}
:=
\sup_{x,y\ne0}
\frac{\left|\sum_{n=1}^{\infty}x_ny_n\right|}
{\|x\|^*_{p,s}\|y\|^*_{p',s'}}
\end{equation}
is not known.

\noindent With this notation, Theorem~\ref{Holder-explicit} gives the upper bound
\begin{equation}\label{A-upper}
\Aopt_{p,s}
\le
\mathcal D_{p,s}.
\end{equation}
On the other hand, the block sequences provide explicit lower bounds. For every $K\ge1$,
\begin{equation}\label{block-lower}
\Aopt_{p,s}
\ge
\frac{K}
{\|u^K\|^*_{p,s}\|u^K\|^*_{p',s'}}.
\end{equation}
Using the explicit formula for the maximal norm of $u^K$ in Lemma~\ref{lema41}, this becomes
\begin{align}\label{block-lower-expanded}
\Aopt_{p,s}
&\ge
K\left(\sum_{n=1}^{K}n^{\frac{s}{p}-1}
+K^s\sum_{n=K+1}^{\infty}n^{-\frac{s}{p'}-1}\right)^{-1/s}
\notag\\
&\quad\times
\left(\sum_{n=1}^{K}n^{\frac{s'}{p'}-1}
+K^{s'}\sum_{n=K+1}^{\infty}n^{-\frac{s'}{p}-1}\right)^{-1/s'}.
\end{align}
In particular, taking $K=1$ gives the one-atom lower bound
\begin{equation}\label{atom-holder-lower}
\Aopt_{p,s}
\ge
\zeta\!\left(\frac{s}{p'}+1\right)^{-1/s}
\zeta\!\left(\frac{s'}{p}+1\right)^{-1/s'}.
\end{equation}
Letting $K\to\infty$ in \eqref{block-lower-expanded} and using the power-sum asymptotics from Remark~\ref{continuous-constant-remark} (together with $p+p'=pp'$) gives the continuous asymptotic lower bound
\begin{equation}\label{cont-holder-lower}
\Aopt_{p,s}
\ge
\frac{s^{1/s}(s')^{1/s'}}{pp'}.
\end{equation}
Thus
\begin{equation}\label{holder-bounds-summary}
\max\left\{
\zeta\!\left(\frac{s}{p'}+1\right)^{-1/s}
\zeta\!\left(\frac{s'}{p}+1\right)^{-1/s'},
\frac{s^{1/s}(s')^{1/s'}}{pp'}
\right\}
\le
\Aopt_{p,s}
\le
\mathcal D_{p,s}.
\end{equation}

\begin{remark}\label{Holder-constant-remark}
The value $s^{1/s}(s')^{1/s'}/(pp')$ in \eqref{cont-holder-lower} is the sharp
constant in the continuous H\"older inequality of Kolyada and Soria
\cite{KolyadaSoria2010}. In the discrete setting it arises only as a long-block
limit and, by \eqref{atom-holder-lower}, it does not give the value of
$\Aopt_{p,s}$ in general. 

For $p=s$, for instance, the one-atom bound reads
$$\Aopt_{p,p}\ge\zeta(p)^{-1/p}\zeta(p')^{-1/p'},$$ which is strictly larger than
$p^{1/p}(p')^{1/p'}/(pp')$, since $\zeta(p)<p'$ and $\zeta(p')<p$. As in
Remark~\ref{remark-h2-constant}, the discrepancy comes from the contribution of
the first atoms. In fact, for $p=s$ the upper bound in
\eqref{holder-bounds-summary} coincides with the one-atom bound, so that
$$\Aopt_{p,p}=\zeta(p)^{-1/p}\,\zeta(p')^{-1/p'}.$$
\end{remark}

\end{document}